\newtheorem{theorem}{Theorem}[section]
\newtheorem{proposition}[theorem]{Proposition}
\newtheorem{corollary}[theorem]{Corollary}
\theoremstyle{remark}
\newtheorem{remark}[theorem]{Remark}
\newtheorem{example}[theorem]{Example}
\newtheorem*{acknowledgments}{Acknowledgments}
\numberwithin{equation}{section}
\newcommand{\Order}{\mathcal{O}}
\newcommand{\into}{\hookrightarrow}
\newcommand{\onto}{\twoheadrightarrow}
\newcommand{\isomto}{\overset{\sim}{\to}}
\newcommand{\tensor}{\mathbin{\otimes}}
\newcommand{\closure}[1]{\overline{#1}}
\newcommand{\Z}{\mathbb{Z}}
\newcommand{\Q}{\mathbb{Q}}
\newcommand{\F}{\mathbb{F}}
\newcommand{\et}{\mathrm{et}}
\newcommand{\fppf}{\mathrm{fppf}}
\newcommand{\sep}{\mathrm{sep}}
\DeclareMathOperator{\Gal}{Gal}
\DeclareMathOperator{\Hom}{Hom}
\DeclareMathOperator{\End}{End}
\DeclareMathOperator{\Ker}{Ker}
\DeclareMathOperator{\Ext}{Ext}
\DeclareMathOperator{\Spec}{Spec}
\DeclareMathOperator{\Ab}{Ab}
\DeclareMathOperator{\Sel}{Sel}
\DeclareFontFamily{U}{wncy}{}
\DeclareFontShape{U}{wncy}{m}{n}{<->wncyr10}{}
\DeclareSymbolFont{mcy}{U}{wncy}{m}{n}
\DeclareMathSymbol{\Sha}{\mathord}{mcy}{"58}
\title[(Co)finiteness of fine Selmer groups over function fields]
{Finiteness and cofiniteness of fine Selmer groups over function fields}
\author{Sohan Ghosh}
\address{
	Harish Chandra Research Institute, A CI of Homi Bhabha National
	Institute, Chhatnag Road, Jhunsi, Prayagraj (Allahabad) 211 019 India
}
\email{ghoshsohan4@gmail.com}
\author{Jishnu Ray}
\address{
	Harish Chandra Research Institute, A CI of Homi Bhabha National
	Institute, Chhatnag Road, Jhunsi, Prayagraj (Allahabad) 211 019 India
}
\email{jishnuray@hri.res.in; jishnuray1992@gmail.com}
\author{Takashi Suzuki}
\address{
	Department of Mathematics, Chuo University,
	1-13-27 Kasuga, Bunkyo-ku, Tokyo 112-8551, Japan
}
\email{tsuzuki@gug.math.chuo-u.ac.jp}
\date{April 5, 2025}
\subjclass[2020]{11R23 (Primary) 11G10, 11R58, 14G17, 14K15 (Secondary)}
\keywords{Selmer groups; abelian varieties;
Iwasawa invariants; function fields}
\begin{document}

\begin{abstract}
	We prove that the dual fine Selmer group of an abelian variety
	over the unramified $\Z_{p}$-extension of a function field
	is finitely generated over $\Z_{p}$.
	This is a function field version of a conjecture of Coates--Sujatha.
	We further prove that the fine Selmer group is finite (respectively zero)
	if the separable $p$-primary torsion of the abelian variety is finite
	(respectively zero).
	These results are then generalized to
	certain ramified $p$-adic Lie extensions.
\end{abstract}

\maketitle

\tableofcontents


\section{Introduction}
\label{0049}

Coates--Sujatha \cite{CS05} introduce a certain subgroup
of the Selmer group of an elliptic curve
over the cyclotomic $\Z_{p}$-extension of a number field,
which they call the fine Selmer group.
In \cite[Section 3, Conjecture A]{CS05},
they conjecture that its Pontryagin dual is
always finitely generated over $\Z_{p}$
(when the corresponding statement for the full Selmer group
is known to be false in general).
In this paper, we will prove a function field version of this conjecture
for all abelian varieties.%
\footnote{
    \cite{CS05} assumes $p \ne 2$ for simplicity
    while everything is expected to be extended to $p = 2$.
    On the other hand, our results and methods are uniform in all $p$.
    The difference comes from the usual irregular behavior of the structure of
    $\Gal(\Q(\zeta_{p^{\infty}}) / \Q) \cong \Z_{p}^{\times}$ for $p = 2$
    compared $\Gal(\F_{p^{p^{\infty}}} / \F_{p}) \cong \Z_{p}$
    and from the presence/absence of archimedean places
    contributing to $2$-torsion.
}
An earlier result in this direction is
the case of ordinary elliptic curves by
Ghosh--Jha--Shekhar \cite[Theorem 3.7]{GJS22}.
Our proof is based on a different method,
where systematic treatment of schematic closures
of separable $p$-torsion of abelian varieties plays a key role.

Let $X$ be a proper smooth geometrically connected curve
over a finite field $k$ of characteristic $p$
with function field $K$.
Let $A$ be an abelian variety over $K$.
Let $k_{\infty}$ be the $\Z_{p}$-extension of $k$
and $K_{\infty} = K k_{\infty}$ the unramified $\Z_{p}$-extension of $K$.
Let $S$ be any finite set of places of $K$
containing all places of bad reduction for $A$,
so that $A$ extends to an abelian scheme $\mathcal{A}$
over $U := X \setminus S$.
Let $U_{0}$ be the set of closed points of $U$.
Let $S_{\infty}$, $U_{\infty}$, $U_{\infty, 0}$ be
the inverse images of $S$, $U$ and $U_{0}$
in $X_{\infty} = X \times_{k} k_{\infty}$, respectively.
Define the $p$-primary $S$-fine Selmer group
$R^{S}(A / K_{\infty})$ of $A$ over $K_{\infty}$
as the kernel of the map on the fppf cohomology groups
	\[
                H^{1}(K_{\infty}, A[p^{\infty}])
            \to
                    \bigoplus_{w \in S_{\infty}}
                        H^{1}(K_{\infty, w}, A[p^{\infty}])
                \oplus
                    \bigoplus_{w \in U_{\infty, 0}}
                        H^{1}(K_{\infty, w}, A)[p^{\infty}],
	\]
which is a $\Lambda = \Z_{p}[[\Gal(K_{\infty} / K)]]$-module.
If $S = \emptyset$,%
\footnote{
	The above definition of $R^{S}(A / K_{\infty})$ makes sense
	for any finite $S$ including $S = \emptyset$,
	but we treat $R^{S}(A / K_{\infty})$ below
	only when $S$ contains all bad places for $A$.
}
it is the usual $p$-primary Selmer group $\Sel(A / K_{\infty})$,
whose Pontryagin dual is finitely generated torsion over
$\Lambda$ by Ochiai--Trihan \cite[Theorem 1.7]{OT09}.
In general, $R^{S}(A / K_{\infty})$ is a submodule of $\Sel(A / K_{\infty})$.
For any finite set of places $S'$ of $K$ containing $S$,
we have $R^{S'}(A / K_{\infty}) \subset R^{S}(A / K_{\infty})$.
Let $R^{S}(A / K_{\infty})^{\vee}$ be
the Pontryagin dual of $R^{S}(A / K_{\infty})$.

Let $A[p]^{\natural}$ be the $\Gal(K^{\sep} / K)$-module
$A(K^{\sep})[p]$ viewed as a finite \'etale group scheme over $K$.
Let $\mathcal{A}[p]^{\natural}$ be
the schematic closure of $A[p]^{\natural}$ in $\mathcal{A}[p]$.
It is a finite flat group scheme over $U$
with (\'etale) generic fiber $A[p]^{\natural}$.
In this paper, we will prove the following:

\begin{theorem} \label{0014}
	Assume that $S \ne \emptyset$ and
	$\mathcal{A}[p]^{\natural}$ is \'etale over $U$.
	\begin{enumerate}
		\item \label{0015}
			$R^{S}(A / K_{\infty})$ is of cofinite type over $\Z_{p}$
			(namely, its part killed by $p$ is finite).
		\item \label{0016}
			For any finite set of places $S'$ of $K$ containing $S$,
			we have $R^{S'}(A / K_{\infty}) = R^{S}(A / K_{\infty})$.
	\end{enumerate}
\end{theorem}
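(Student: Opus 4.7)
\emph{Proof proposal.} My strategy is to prove (1) by showing that $R^{S}(A/K_{\infty})[p]$ is finite, which by Nakayama's lemma forces its Pontryagin dual to be finitely generated over $\Z_{p}$. The plan is to identify a mod-$p$ version of the fine Selmer group with a compactly-supported \'etale cohomology group on $U_{\infty}$, where finiteness follows from standard results on \'etale cohomology of curves with finite coefficients.

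I would first reduce from $A[p^{\infty}]$ to $A[p]$ using the fppf short exact sequence
\[
    0 \to A[p] \to A[p^{\infty}] \xrightarrow{p} A[p^{\infty}] \to 0,
\]
which yields (globally and locally at every completion) surjections $H^{1}(\var, A[p]) \onto H^{1}(\var, A[p^{\infty}])[p]$ with kernels $A(\var)[p^{\infty}]/p$. Pulling the local conditions defining $R^{S}$ back to $H^{1}(K_{\infty}, A[p])$ produces a \emph{mod-$p$ fine Selmer group}, of which $R^{S}(A/K_{\infty})[p]$ is a quotient; so it suffices to prove that this mod-$p$ group is finite.

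I would then identify this mod-$p$ fine Selmer group with a subquotient of $H^{1}_{\et,c}(U_{\infty}, \mathcal{A}[p]^{\natural})$. Since $\mathcal{A}[p]$ is a finite flat extension of $A[p]$ over $U$ and $\mathcal{A}[p]^{\natural}$ is \'etale by hypothesis, the latter is a locally constant constructible $p$-torsion sheaf on $U_{\infty}$. The vanishing condition at $S_{\infty}$ is encoded by compact support, while the condition ``dies in $H^{1}(K_{\infty,w}, A)[p^{\infty}]$'' at good places $w$ translates, via the N\'eron model and the connected-\'etale structure of $A[p]$, into the unramifiedness of the associated $\mathcal{A}[p]^{\natural}$-class. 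Finiteness of $H^{1}_{c}(U_{\bar{k}}, \mathcal{A}[p]^{\natural})$ is standard (by excision from the proper smooth compactification), and since $\Gal(\bar{k}/k_{\infty}) \cong \prod_{\ell \ne p}\Z_{\ell}$ has trivial $p$-cohomological dimension, Hochschild--Serre yields $H^{1}_{c}(U_{\infty}, \mathcal{A}[p]^{\natural}) = H^{1}_{c}(U_{\bar{k}}, \mathcal{A}[p]^{\natural})^{\Gal(\bar{k}/k_{\infty})}$, which is finite.

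For (2), the same cohomological description shows that any $c \in R^{S}(A/K_{\infty})$ arises from an \'etale cohomology class of $\mathcal{A}[p^{\infty}]^{\natural}$ on $U_{\infty}$ (higher \'etaleness of $\mathcal{A}[p^{n}]^{\natural}$ follows inductively from that of $\mathcal{A}[p]^{\natural}$). Such a class extends across any new place $w \in S' \setminus S$---which is good for $A$---as an unramified class, so its restriction to $K_{\infty,w}$ is automatically unramified; combined with the vanishing in $H^{1}(K_{\infty,w}, A)[p^{\infty}]$, this forces vanishing in $H^{1}(K_{\infty,w}, A[p^{\infty}])$, which is precisely the strengthened condition for $R^{S'}$. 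The main obstacle I foresee is the identification in the third paragraph: matching the fine condition at $S_{\infty}$, phrased via fppf cohomology of the non-\'etale sheaf $A[p^{\infty}]$, with compactly-supported \'etale cohomology of the \'etale subgroup $\mathcal{A}[p]^{\natural}$ requires careful control over how the local-local part $A[p]/A[p]^{\natural}$ affects the local conditions at bad places.
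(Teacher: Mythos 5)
Your overall route is the same as the paper's: reduce from $A[p^{\infty}]$ to $A[p]$ up to (co)finite error, pass to the schematic closure $\mathcal{A}[p]^{\natural}$ of the separable part, identify the fine Selmer condition with an \'etale cohomology group of (a modification of) $\mathcal{A}[p]^{\natural}$ on the curve, invoke finiteness of \'etale cohomology with constructible coefficients over $\closure{k}$, and descend to $k_{\infty}$ via Hochschild--Serre using that $\Gal(\closure{k}/k_{\infty})$ is pro-prime-to-$p$. (The paper works with $H^{1}(X, N^{\natural,S})$, where $N^{\natural,S}$ is glued from $\mathcal{A}[p^{n}]^{\natural}$ on $U$ and $j_{v,\ast}$ of the generic fibre at $v \in S$, rather than with $H^{1}_{c}(U,\mathcal{A}[p]^{\natural})$; that difference is cosmetic, since the fine Selmer group is the image of $H^{1}_{c}$ in $H^{1}(U,\cdot)$.)

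However, there is a genuine gap, and it is exactly the point you flag at the end as ``the main obstacle'' without resolving it: you never justify that the non-smooth part $\mathcal{A}[p]/\mathcal{A}[p]^{\natural}$ (multiplicative or local-local in characteristic $p$) contributes nothing to the fine Selmer group, so the claimed identification of the mod-$p$ fine Selmer group with a subquotient of $H^{1}_{c}(U_{\infty},\mathcal{A}[p]^{\natural})$ is unproved. This is not a routine check: $H^{1}(U, \mathcal{A}[p]/\mathcal{A}[p]^{\natural})$ is typically infinite-dimensional (think of $\mu_{p}$ or $\alpha_{p}$), so finiteness cannot be obtained by ignoring this piece, and without controlling it your argument only bounds the image of the $\natural$-part, not the whole group. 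The paper's key input here is Proposition \ref{0002}: for a finite group scheme $M$ over $K$ with $M^{\natural}=0$ and a separable extension $K'/K$, the map $H^{1}(K,M)\to H^{1}(K',M)$ is injective; applied with $K'=K_{v}$ for a single $v\in S$ (this is where $S\neq\emptyset$ is used), it gives $R^{S}$ of the quotient $\mathcal{A}[p]^{/\natural}$ equal to zero, whence $R^{S}(\mathcal{A}[p]^{\natural})\isomto R^{S}(\mathcal{A}[p])$, and flatness of the schematic closure is what makes the local-cohomology comparison at good places work. Your proof of part \eqref{0016} has the same dependence: the claim that every class in $R^{S}(A/K_{\infty})$ ``arises from an \'etale cohomology class of $\mathcal{A}[p^{\infty}]^{\natural}$ on $U_{\infty}$'' is precisely the identification $R^{S}\cong H^{1}(X, g_{\ast}A[p^{\infty}]^{\natural})$ of Propositions \ref{0008}, \ref{0011} and \ref{0028}, which rests on the same injectivity lemma. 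Once that lemma is supplied, the rest of your outline (the $p^{\infty}$-to-$p$ reduction, the inductive \'etaleness of $\mathcal{A}[p^{n}]^{\natural}$, the finiteness over $\closure{k}$, and the prime-to-$p$ descent) matches the paper's proof.
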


Statement \eqref{0015} means that
the Iwasawa $\mu$-invariant of $R^{S}(A / K_{\infty})^{\vee}$ is zero.
It gives a function field version of \cite[Section 3, Conjecture A]{CS05}
and is a generalization of \cite[Theorem 3.7]{GJS22}.
The assumptions in Theorem \ref{0014} are satisfied if $S$ is large enough.
The dependence of $R^{S}(A / K_{\infty})$ on $S$ is a subtle matter
pointed out in \cite[Remark 3.2]{GJS22}
that does not arise in the number field setting
(see Wuthrich's remark after \cite[Lemma 2.3]{Wut07}).
In Example \ref{0044}, we will also see these subtleties
by showing that the \'etaleness assumption
on $\mathcal{A}[p]^{\natural}$ is necessary.

Finer information about $R^{S}(A / K_{\infty})$ can be obtained
if the separable $p$-primary torsion $A(K^{\sep})[p^{\infty}]$ is small:

\begin{theorem} \label{0039}
	Assume that $S \ne \emptyset$.
	Let $n \ge 0$ be an integer.
	If $A(K^{\sep})[p^{\infty}]$ is killed by $p^{n}$,
	then $R^{S}(A / K_{\infty})$ is also killed by $p^{n}$.
\end{theorem}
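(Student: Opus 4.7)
The plan is to show that every class in $R^{S}(A/K_{\infty})$ is annihilated by $p^{n}$, by comparing with a level-$n$ analogue. Consider the exact sequence of fppf sheaves
\[
0 \to A[p^{n}] \to A[p^{\infty}] \xrightarrow{p^{n}} A[p^{\infty}] \to 0,
\]
whose associated long exact sequence, applied to $K_{\infty}$ and to each $K_{\infty, w}$, reads
\[
H^{0}(\var, A[p^{\infty}]) \xrightarrow{p^{n}} H^{0}(\var, A[p^{\infty}]) \to H^{1}(\var, A[p^{n}]) \to H^{1}(\var, A[p^{\infty}])[p^{n}] \to 0.
\]
The hypothesis forces $H^{0}(K_{\infty}, A[p^{\infty}])$ and $H^{0}(K_{\infty, w}, A[p^{\infty}])$, both contained in $A(K^{\sep})[p^{\infty}]$, to be killed by $p^{n}$. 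Hence the $p^{n}$-multiplication on these $H^{0}$'s is zero, and the image of $H^{1}(K_{\infty}, A[p^{n}])$ in $H^{1}(K_{\infty}, A[p^{\infty}])$ is exactly the $p^{n}$-torsion subgroup.

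The theorem is thus equivalent to the inclusion $R^{S}(A/K_{\infty}) \subset H^{1}(K_{\infty}, A[p^{\infty}])[p^{n}]$. To prove this, I would introduce the level-$n$ fine Selmer group
\[
R^{S}_{n} := \ker\Bigl(H^{1}(K_{\infty}, A[p^{n}]) \to \bigoplus_{w \in S_{\infty}} H^{1}(K_{\infty, w}, A[p^{n}]) \oplus \bigoplus_{w \in U_{\infty, 0}} H^{1}(K_{\infty, w}, A)[p^{n}]\Bigr),
\]
automatically killed by $p^{n}$ as a subgroup of $H^{1}(K_{\infty}, A[p^{n}])$. The inclusion $A[p^{n}] \hookrightarrow A[p^{\infty}]$ and compatibility of Kummer sequences yield a well-defined map $R^{S}_{n} \to R^{S}(A/K_{\infty})$; the heart of the argument is proving that this map is surjective under the hypothesis, whereupon $R^{S}(A/K_{\infty})$ inherits the $p^{n}$-annihilation.

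The hard part will be this surjectivity. Given $\xi \in R^{S}(A/K_{\infty})$, a global lift $\eta \in H^{1}(K_{\infty}, A[p^{n}])$ exists if and only if $p^{n} \xi = 0$ in $H^{1}(K_{\infty}, A[p^{\infty}])$, and is then unique up to $A(K_{\infty})[p^{\infty}]$ (killed by $p^{n}$). Moreover, at each $w \in S_{\infty}$ the level-$n$ local condition on $\eta$ (vanishing in $H^{1}(K_{\infty, w}, A[p^{n}])$) is strictly stronger than the level-$\infty$ condition satisfied by $\xi$ (vanishing in $H^{1}(K_{\infty, w}, A[p^{\infty}])$), the discrepancy being controlled by $A(K_{\infty, w})[p^{\infty}]$ (again killed by $p^{n}$). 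All the obstructions and discrepancies thus lie in $p^{n}$-torsion groups of separable points, but assembling them into a coherent global lift is nontrivial. For this step I would invoke the schematic-closure framework of Theorem \ref{0014}: under the hypothesis $\mathcal{A}[p^{m}]^{\natural} = \mathcal{A}[p^{n}]^{\natural}$ over $U$ for all $m \ge n$, so the ind-flat group scheme $\mathcal{A}[p^{\infty}]^{\natural}$ is killed by $p^{n}$, and the cohomological machinery underlying Theorem \ref{0014} expresses (or bounds) $R^{S}(A/K_{\infty})$ in terms of the cohomology of $\mathcal{A}[p^{\infty}]^{\natural}$ over $U_{\infty}$. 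The assumption $S \neq \emptyset$ plays the same excision role here as in Theorem \ref{0014}, and the resulting cohomological bound is annihilated by $p^{n}$.
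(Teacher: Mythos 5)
Your proposal is correct and follows essentially the same route as the paper: your key observation that the hypothesis forces $\mathcal{A}[p^{m}]^{\natural} = \mathcal{A}[p^{n}]^{\natural}$ for all $m \ge n$, so that $\mathcal{A}[p^{\infty}]^{\natural}$ is killed by $p^{n}$, combined with the $S \ne \emptyset$ description of the fine Selmer group via the schematic-closure object (Propositions \ref{0008} and \ref{0028} \eqref{0036}, which---unlike the \'etale-specific parts of the machinery behind Theorem \ref{0014}---require no \'etaleness hypothesis, exactly as needed since Theorem \ref{0039} has none), is precisely the paper's Proposition \ref{0012}, which together with Proposition \ref{0052} proves the theorem. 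The Kummer-sequence preamble about lifting classes to level $n$ is unnecessary scaffolding; once the cohomological description is invoked, the $p^{n}$-annihilation is immediate.
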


\begin{corollary} \label{0040}
	Assume that $S \ne \emptyset$.
	\begin{enumerate}
		\item \label{0018}
			If $A(K^{\sep})[p^{\infty}]$ is finite,
			then $R^{S}(A / K_{\infty})$ has bounded torsion
			(killed by a fixed power of $p$).
		\item \label{0041}
			If $A(K^{\sep})[p^{\infty}]$ is finite and
			$\mathcal{A}[p]^{\natural}$ is \'etale over $U$,
			then $R^{S}(A / K_{\infty})$ is finite.
		\item \label{0042}
			If $A(K^{\sep})[p] = 0$, then $R^{S}(A / K_{\infty}) = 0$.
	\end{enumerate}
\end{corollary}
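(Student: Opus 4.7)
The plan is to deduce all three parts of Corollary \ref{0040} directly from Theorems \ref{0014} and \ref{0039}; the corollary is a clean packaging of their consequences, so I do not anticipate any serious obstacle beyond the heavy lifting already carried out in those two theorems.

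For part \eqref{0018}, since $A(K^{\sep})[p^{\infty}]$ is a finite abelian $p$-group, it is annihilated by $p^{n}$ for some $n \ge 0$, and Theorem \ref{0039} forces $R^{S}(A / K_{\infty})$ to be annihilated by the same $p^{n}$, which is precisely what bounded torsion means. For part \eqref{0041}, I would combine this with Theorem \ref{0014}\eqref{0015}: the Pontryagin dual $R^{S}(A / K_{\infty})^{\vee}$ is then a finitely generated $\Z_{p}$-module annihilated by $p^{n}$, hence finite, and dualizing back recovers finiteness of $R^{S}(A / K_{\infty})$. Equivalently, one may filter $M = R^{S}(A / K_{\infty})$ by $0 \subset M[p] \subset M[p^{2}] \subset \cdots \subset M[p^{n}] = M$; each successive quotient $M[p^{i+1}]/M[p^{i}]$ embeds (via multiplication by $p^{i}$) into $M[p]$, which is finite by the cofinite-type statement, so $M$ itself is finite.

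For part \eqref{0042}, I would first observe that $A(K^{\sep})[p] = 0$ forces $A(K^{\sep})[p^{\infty}] = 0$: if some $x$ in the latter had minimal order $p^{m}$ with $m \ge 1$, then $p^{m-1} x$ would be a nonzero element of $A(K^{\sep})[p]$, a contradiction. Applying Theorem \ref{0039} with $n = 0$ then yields that $R^{S}(A / K_{\infty})$ is annihilated by $p^{0} = 1$, i.e., vanishes. The one place where a little care is warranted is the elementary implication ``cofinite type over $\Z_{p}$ and annihilated by $p^{n}$ $\Rightarrow$ finite'' used in \eqref{0041}, which I have addressed above; everything else follows immediately from the cited theorems.
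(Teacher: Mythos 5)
Your proposal is correct and follows essentially the same route the paper intends: the corollary is deduced directly from Theorem \ref{0039} (parts \eqref{0018} and \eqref{0042}, the latter via $A(K^{\sep})[p]=0 \Rightarrow A(K^{\sep})[p^{\infty}]=0$ and $n=0$, matching the remark after Proposition \ref{0012}) combined with Theorem \ref{0014}\eqref{0015} for part \eqref{0041}. Your explicit verification that ``cofinite type and killed by $p^{n}$ implies finite'' via the filtration $M[p^{i}]$ is exactly the elementary step left implicit in the paper.
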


Statement \eqref{0018} means that
the Iwasawa $\lambda$-invariant of $R^{S}(A / K_{\infty})^{\vee}$ is zero in this case.
The finiteness of $A(K^{\sep})[p^{\infty}]$ holds
if $\End_{\closure{K}}(A) \cong \Z$ by R\"ossler \cite[Theorem 1.4]{Ros13}
and in particular if $A$ is a non-isotrivial elliptic curve.
See R\"ossler \cite[Theorem 1.2]{Ros20}
and D'Addezio \cite[Theorem 1.1.3]{Dad23}
for generalizations and variants.
The vanishing of $A(K^{\sep})[p]$ holds
if $A$ is ordinary and has maximal Kodaira--Spencer rank
by Voloch \cite[Section 4, Proposition]{Vol95}
or if $A[p]$ is infinitesimal.
These results (in particular for non-isotrivial elliptic curves) show
unexpected limitation of $R^{S}(A / K_{\infty})$
for Iwasawa-theoretic purposes,
since $\Lambda$-modules of finite order have trivial characteristic ideals.

On the other hand, $R^{S}(A / K_{\infty})$ does not always have bounded torsion
as we will see in Example \ref{0043}.
Also, there exists an ordinary abelian variety $A$
over some function field $K$
with no isotrivial factors such that $A(K^{\sep})[p^{\infty}]$ is infinite
by Helm \cite[Theorem 1.1]{Hel22}.
It is an interesting problem to calculate $R^{S}(A / K_{\infty})$
and its characteristic ideal for Helm's example.

We can actually describe $R^{S}(A / K_{\infty})$
(or its version with $p^{n}$-torsion)
as an fppf cohomology group of $X$ with coefficients in
a certain quasi-finite flat separated group scheme over $X$
with \'etale generic fiber when $S \ne \emptyset$
(Proposition \ref{0008}).
Theorems \ref{0014} and \ref{0039} basically follow from this description.
As the statements of Theorems \ref{0014} and \ref{0039} make
no reference to the Galois actions,
they are essentially statements
over arbitrary algebraically closed base fields $k$ of characteristic $p > 0$.
Also, the statements make sense
for any $p$-divisible group over $U$
in place of $\mathcal{A}[p^{\infty}]$.
We will work in this generality.

As suggested by one of the referees,
we can actually generalize Theorems \ref{0014} and \ref{0039}
from $K_{\infty}$ to certain (not necessarily commutative or unramified)
$p$-adic Lie extensions of $K$.
We will give this generalization in
Theorems \ref{0048} and \ref{0050}.
The proof uses the original Theorems \ref{0014} and \ref{0039}
(or more precisely, Propositions \ref{0028} and \ref{0012})
in an essential manner.
This is a fine Selmer version of \cite[Theorem 1.9]{OT09}
and a function field version of \cite[Lemma 3.2, Corollary 3.3]{CS05}.

We make one note on the literature.
The idea that $A(K^{\sep})[p^{\infty}]$ governs Hasse principles
(such as Tate--Shafarevich groups)
for function fields already appears in the work of
Gonz\'alez-Avil\'es--Tan \cite{GAT12} (among others).
Their main theorem and Proposition 3.2 are closely related to
our results.
However, their setting over $K$ itself and our setting over $K_{\infty}$
have significant differences.
For example, the group of locally trivial $\Z / p \Z$-torsors over $K$ is trivial
by the Chebotarev density theorem.
But the group of locally trivial $\Z / p \Z$-torsors over $K_{\infty}$ is
the $\Gal(\closure{k} / k_{\infty})$-fixed part of
$H^{1}(X \times_{k} \closure{k}, \Z / p \Z)$,
which is non-trivial if, for example,
$X$ is an elliptic curve with $\pi_{0}(X[p]) \cong \Z / p \Z$.
Our Example \ref{0043} shows that a direct analogue of
\cite[Proposition 3.2]{GAT12} for $K_{\infty}$ does not hold.
It is $K_{\infty}$ that matters in Iwasawa theory.

In the rest of the paper,
all groups and group schemes are assumed to be commutative
(except the Galois group in Section \ref{0051}).

\begin{acknowledgments}
	The authors thank Marco D'Addezio, Somnath Jha, Damian R\"ossler,
	Ki-Seng Tan and Fabien Trihan for helpful discussions and references.
	They are also grateful to the referees
	for the careful reading and helpful comments,
	especially about the suggestions of Theorems \ref{0048} and \ref{0050}.
	The first author acknowledges the support received
	from the HRI postdoctoral fellowship.
	The second author also gratefully acknowledges
	support from the Inspire research grant,
	DST, Govt.\ of India.
\end{acknowledgments}


\section{Some facts about finite flat group schemes}

We will collect here some general facts about finite flat group schemes.

For a group scheme $G$ locally of finite type over a field $K$,
let $G^{\natural} \subset G$ be the maximal smooth closed subgroup
(\cite[Lemma C.4.1]{CGP15}).
The formation $G \mapsto G^{\natural}$ commutes with
(possibly transcendental) separable base field extensions
(\cite[Tag 030O]{Sta24}) by \cite[Lemma C.4.1]{CGP15}.
Set $G^{/ \natural} = G / G^{\natural}$.
Then $(G^{/ \natural})^{\natural} = 0$.
If $G$ is finite, then $G^{\natural}$ is
the $\Gal(K^{\sep} / K)$-module $G(K^{\sep})$
viewed as an \'etale group scheme over $K$.

\begin{proposition} \label{0002}
	Let $K' / K$ be a separable extension of fields.
	Let $N$ be a finite group scheme over $K$ with $N^{\natural} = 0$.
	Then the natural map
	$H^{1}(K, N) \to H^{1}(K', N)$
	is injective.
\end{proposition}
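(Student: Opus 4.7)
The plan is to work directly with fppf $N$-torsors: since $H^{1}$ of a commutative group scheme is an abelian group, it suffices to show that any $N$-torsor $P$ over $K$ acquiring a $K'$-point must already be trivial.

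The key input is that $N(K'') = 0$ for every separable extension $K''/K$. By definition, $N^{\natural}$ is the $\Gal(K^{\sep}/K)$-module $N(K^{\sep})$ regarded as an \'etale $K$-group scheme (as recalled just before the statement), so the hypothesis $N^{\natural} = 0$ gives $N(K^{\sep}) = 0$. For a general separable $K''/K$, write $N = \Spec A$; a $K''$-point of $N$ is a $K$-algebra map $A \to K''$, which factors through the residue field $A / \ideal{m}$ at some maximal ideal $\ideal{m}$. This residue field embeds in $K''$ and is algebraic over $K$; since an algebraic element of a separable extension is separable over the base, the residue field lies in $K^{\sep}$, whence $N(K'') \into N(K^{\sep}) = 0$.

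Now let $P = \Spec B$ be an $N$-torsor over $K$ with $P(K') \ne \emptyset$. A $K'$-point of $P$ is a $K$-algebra map $B \to K'$; its kernel, being a prime of the Artinian algebra $B$, equals a maximal ideal $\ideal{m}$, so the map factors through $L := B / \ideal{m} \into K'$. The same residue-field analysis shows $L/K$ is separable algebraic, so composing $B \onto L$ with each of the $[L:K]$ distinct $K$-embeddings $L \into K^{\sep}$ yields a distinct $K^{\sep}$-point of $P$. But $P(K^{\sep})$ is a torsor under $N(K^{\sep}) = 0$ and hence has at most one element, forcing $[L:K] = 1$, i.e., $L = K$. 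Then $B \onto L = K$ is a $K$-point of $P$, and the torsor is trivial. The main subtlety is the residue-field analysis establishing $L/K$ separable, which is precisely where the separability hypothesis on $K'/K$ is used.
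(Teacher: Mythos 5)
Your proof is correct, and it finishes differently from the paper's. Both arguments share the same two inputs: since $N$ is finite, an $N$-torsor is finite over $K$, so a $K'$-point of it already factors through a finite subextension $L \subset K'$, which is separable over $K$ because $K'/K$ is; and $N^{\natural} = 0$ means exactly that $N(K^{\sep}) = 0$. The paper then concludes cohomologically: a torsor split by a finite separable extension has its class in the subgroup $H^{1}(K_{\et}, N)$ of $H^{1}(K, N)$, and this group vanishes because $N$ is the zero sheaf on the small \'etale site. You instead conclude by a direct count of separable points: the torsor isomorphism $N \times P \isomto P \times P$ evaluated on $K^{\sep}$-points, together with $N(K^{\sep}) = 0$, shows $P(K^{\sep})$ has at most one element, while the $[L:K]$ distinct $K$-embeddings $L \into K^{\sep}$ composed with the surjection $B \onto L$ produce that many distinct points, forcing $[L:K] = 1$ and hence a $K$-rational point of $P$. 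What each buys: the paper's route is shorter once one grants the standard identification of \'etale-split fppf torsor classes with \'etale cohomology classes (it follows [DH19]); yours is more elementary and self-contained, using only the definition of a torsor and basic separability, with no \'etale--fppf comparison. Minor remarks: you leave implicit, exactly as the paper does, that fppf cohomology classes of $N$ are represented by schemes and that the torsor is finite over $K$ (both by fppf descent from a trivializing cover); your phrase that the residue field ``lies in $K^{\sep}$'' should be read as ``embeds into $K^{\sep}$'', which is how you actually use it; and your first paragraph proving $N(K'') = 0$ for every separable $K''/K$ is more than is needed, since the torsor argument only uses $N(K^{\sep}) = 0$ together with the residue-field analysis applied to $P$.
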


\begin{proof}
	This is essentially proved during the proof of \cite[Proposition 4.10]{DH19}.
	We recall the proof here.
	Let $Z$ be an fppf $N$-torsor over $K$ with $Z(K') \ne \emptyset$.
	Then $Z$ is finite over $K$,
	so $Z(K'') \ne \emptyset$ for some finite subextension $K''$ of $K'$.
	Since $K' / K$ is separable, so is $K'' / K$.
	Hence the class of $Z$ belongs to the subgroup
	$H^{1}(K_{\et}, N)$ of $H^{1}(K, N)$.
	But $N$ is zero as a sheaf on $\Spec K_{\et}$ since $N^{\natural} = 0$.
	Therefore $H^{1}(K_{\et}, N) = 0$,
	so $Z$ is trivial.
\end{proof}

The following proposition allows us to modify
a finite flat scheme into an \'etale scheme:

\begin{proposition} \label{0031}
	Let $R$ be a discrete valuation ring with fraction field $K$.
	Let $j \colon \Spec K_{\et} \to \Spec R_{\et}$ be
	the natural morphism on the small \'etale sites.
	Let $V$ be a finite flat scheme over $R$
	with \'etale generic fiber $V_{K}$.
	\begin{enumerate}
		\item \label{0033}
			There exists a unique morphism
			$j_{\ast} V_{K} \to V$ of $R$-schemes
			that extends the identity morphism
			$V_{K} \to V_{K}$ over $K$.
		\item \label{0034}
			The morphism $j_{\ast} V_{K} \to V$ is an isomorphism
			if $V$ is \'etale over $R$.
		\item \label{0035}
			If moreover $V$ is a group scheme over $R$,
			then $j_{\ast} V_{K} \to V$ is a group scheme morphism.
	\end{enumerate}
\end{proposition}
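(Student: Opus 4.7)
The proof rests on two features of $j_* V_K$: it is representable by a quasi-finite \'etale $R$-scheme with generic fiber $V_K$, and it is flat over $R$, so $V_K$ is a schematically dense open subscheme of $j_* V_K$.

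For part \eqref{0033}, uniqueness is immediate: since $V$ is finite (hence separated) over $R$ and $V_K$ is schematically dense in $j_* V_K$, any two $R$-morphisms $j_* V_K \to V$ agreeing on $V_K$ coincide. For existence, I would decompose $j_* V_K$ into connected components. Each component is of the form $\Spec R'$ for a finite \'etale $R$-algebra $R'$ (arising from an unramified piece of $V_K$) or of the form $\Spec K'$ sitting over the generic point (arising from a ramified piece). On components of the first type, the valuative criterion of properness applied to $V \to \Spec R$ (proper, since finite) extends the generic-fiber morphism $\Spec(R' \otimes_R K) \hookrightarrow V_K \subset V$ uniquely to $\Spec R' \to V$. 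On components of the second type, the morphism is simply the inclusion $\Spec K' \hookrightarrow V_K \subset V$.

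For part \eqref{0034}, suppose $V$ is \'etale over $R$. Then for any \'etale $R$-algebra $R'$, an $R$-morphism $\Spec R' \to V$ is determined by its generic fiber $\Spec(R' \otimes_R K) \to V_K$ alone: the image under the corresponding ring map lies in the integral closure of $R$ in $R' \otimes_R K$, which equals $R'$ by normality of \'etale $R$-algebras. Hence $V(R') = V_K(R' \otimes_R K) = (j_* V_K)(R')$, so $V$ and $j_* V_K$ represent the same sheaf on $\Spec R_{\et}$ and are isomorphic. By the uniqueness in \eqref{0033}, this isomorphism is the morphism from part \eqref{0033}.

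For part \eqref{0035}, functoriality of $j_*$ (preserving products) endows $j_* V_K$ with a group scheme structure. To verify that $f \colon j_* V_K \to V$ respects multiplication, I would compare the two morphisms $f \circ m_{j_* V_K}$ and $m_V \circ (f \times f)$ from $j_* V_K \times_R j_* V_K$ to $V$; both restrict to $m_{V_K}$ on the generic fiber $V_K \times_K V_K$, and the density/separatedness argument from \eqref{0033} (applied to the source $j_* V_K \times_R j_* V_K$, which is flat over $R$, and the separated target $V$) forces the two to coincide. The principal technical obstacle is establishing the representability of $j_* V_K$ and its concrete structure as a quasi-finite \'etale $R$-scheme with the advertised generic fiber; once that is in hand, the three parts follow from routine valuative-criterion and density arguments.
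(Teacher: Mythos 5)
Your proposal defers exactly the point that carries all the content of this proposition. The statement only makes sense once one knows that the sheaf $j_{\ast} V_{K}$ on $\Spec R_{\et}$ is representable by an actual $R$-scheme, and the paper's proof consists essentially of establishing this together with the mechanism that produces the map to $V$: one takes the normalization $V'$ of $\Spec R$ in $V_{K}$, identifies $j_{\ast} V_{K}$ with the maximal open subscheme of $V'$ that is \'etale over $R$ (the key input being that an \'etale $R$-scheme $Z$ is normal with schematically dense generic fiber, so a $K$-morphism $Z_{K} \to V_{K}$ extends uniquely to $Z \to V'$ by the universal property of relative normalization), and then obtains $j_{\ast} V_{K} \to V$ from the unique factorization $V_{K} \to V' \to V$, which exists because $\Gamma(V, \Order_{V})$ is finite, hence integral, over $R$. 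You explicitly set this aside as ``the principal technical obstacle,'' so the core of the argument is missing. The remaining steps you give --- uniqueness via schematic density of the generic fiber (flatness of $j_{\ast} V_{K}$ over $R$) and separatedness of $V$; part \eqref{0034} via $V(R') = V_{K}(R' \otimes_{R} K)$ for \'etale $R$-algebras $R'$, using their normality; part \eqref{0035} via $j_{\ast}$ preserving products plus uniqueness --- are correct and close in spirit to the paper, but they all presuppose the representability you did not prove.

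There is also a concrete error in the structure you assign to $j_{\ast} V_{K}$. Writing $V_{K} = \bigsqcup_{i} \Spec K_{i}$ and letting $R_{i}$ be the integral closure of $R$ in $K_{i}$ (finite over $R$ since $K_{i} / K$ is separable), the representing scheme is $\bigsqcup_{i} W_{i}$ where $W_{i} \subset \Spec R_{i}$ is the open locus where $R_{i}$ is \'etale over $R$. For a general (non-henselian) $R$, the semi-local ring $R_{i}$ may have several maximal ideals, some \'etale over $R$ and some not, and then the connected component $W_{i}$ is neither $\Spec$ of a finite \'etale $R$-algebra nor concentrated over the generic point; neither branch of your existence argument applies to it as written. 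This can be repaired (extend over each local ring of $W_{i}$, or better, argue uniformly by integrality as the paper does), but the dichotomy itself must be dropped: it is valid only when $R$ is henselian, whereas the proposition is stated for an arbitrary discrete valuation ring.
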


Here $V_{K}$ is viewed as a representable sheaf of sets on $\Spec K_{\et}$
and the direct image sheaf $j_{\ast} V_{K}$ on $\Spec R_{\et}$ is
representable (see the proof below),
so that it makes sense to talk about an $R$-scheme morphism $j_{\ast} V_{K} \to V$.

\begin{proof}
	Let $V'$ be the normalization (\cite[Tag 035H]{Sta24}) of $\Spec R$ in $V_{K}$.
	Then $j_{\ast} V_{K}$ is
	the maximal open subscheme of $V'$ \'etale over $R$,
	since for any \'etale $R$-scheme $Z$,
	a morphism $Z_{K} \to V_{K}$ on the generic fibers
	uniquely comes from a morphism $Z \to V'$ over $R$
	(by \cite[Tag 035L]{Sta24} and $Z$ being normal).
	Since $V$ is finite flat over $R$,
	the inclusion $V_{K} \into V$ uniquely factors through $V'$
	by \cite[Tag 035I]{Sta24}.
	The composite $j_{\ast} V_{K} \into V' \to V$ is
	the desired unique extension.
	The final statement about group scheme morphisms follows from
	the fact that $j_{\ast}$ commutes with fiber products
	(being the right adjoint)
	and the uniqueness property just proved.
\end{proof}


\section{Fine Selmer groups of finite flat group schemes}
\label{0053}

Let $k$ be an algebraically closed field of characteristic $p > 0$.
Let $X$ be a proper smooth connected curve over $k$ with function field $K$.
Let $X_{0}$ be the set of closed points of $X$.
For $v \in X_{0}$, let $\Order_{v}$ be
the completed local ring of $X$ at $v$
and $K_{v}$ its fraction field.
Let $U \subset X$ be a dense open subscheme
with complement $S$.
Let $U_{0}$ be the set of closed points of $U$.
Let $N$ be a finite flat group scheme over $U$
with generic fiber $N_{K}$.

Let
	\begin{equation} \label{0022}
		\begin{split}
					R^{S}(N)
			&	=
					\Ker \left(
							H^{1}(K, N)
						\to
								\bigoplus_{v \in S}
									H^{1}(K_{v}, N)
							\oplus
								\bigoplus_{v \in U_{0}}
									H^{2}_{v}(\Order_{v}, N)
					\right)
			\\
			&	\cong
					\Ker \left(
							H^{1}(U, N)
						\to
							\bigoplus_{v \in S}
								H^{1}(K_{v}, N)
					\right)
		\end{split}
	\end{equation}
be the $S$-fine Selmer group of $N$,
where $H^{2}_{v}$ denotes the fppf cohomology with support on the closed point
(\cite[Chapter III, Section 0]{Mil06}).
Note that $H^{2}_{v}(\Order_{v}, N)$ does not change
if $\Order_{v}$ is replaced by the henselian local ring at $v$;
see \cite[Proposition 2.6.2]{Suz20a} or \cite[Lemma 2.6]{DH19}.
Note also that $H^{i}_{v}(\Order_{v}, N) = 0$ for $i \le 1$
and $H^{2}_{v}(\Order_{v}, N) \cong H^{1}(K_{v}, N) / H^{1}(\Order_{v}, N)$
by the same proof as \cite[Chapter III, Lemma 1.1]{Mil06}.
For a smaller dense open subscheme $U' \subset U$ with $S' = X \setminus U'$,
the above construction applied to $N \times_{U} U'$ defines
another group $R^{S'}(N)$.
We have a natural injection
	\begin{equation} \label{0021}
		R^{S'}(N) \into R^{S}(N).
	\end{equation}
We note here that $R^{S}(N)$ is denoted by $D^{1}(U, N)$
in \cite[the paragraph after Chapter II, Corollary 3.3]{Mil06}
and \cite[Section 3]{DH19}.

Below we are going to describe $R^{S}(N)$ as a certain cohomology group of $X$.
For this, we will modify $N$ and extend it to the whole $X$
in the following particular manner.
Let $N^{\natural} \subset N$
be the schematic closure of
$N_{K}^{\natural}$ ($= (N_{K})^{\natural}$) in $N$,
which is a finite flat closed group subscheme over $U$
with (\'etale) generic fiber $N_{K}^{\natural}$.
Let $N^{\natural, S}$ be the quasi-finite flat separated group scheme over $X$
uniquely determined by the following requirements:
\begin{enumerate}
	\item
		The restriction of $N^{\natural, S}$ to $U$ is $N^{\natural}$.
	\item
		For any $v \in S$,
		the restriction of $N^{\natural, S}$ to $\Spec \Order_{v}$ is
		$j_{v, \ast}(N_{K}^{\natural})$,
		where $j_{v} \colon \Spec K_{v, \et} \to \Spec \Order_{v, \et}$ is
		the natural morphism on the small \'etale sites.
\end{enumerate}

For a smaller dense open subscheme $U' \subset U$ with $S' = X \setminus U'$,
the above construction applied to $N \times_{U} U'$ defines
another group scheme $N^{\natural, S'}$ over $X$.
By Proposition \ref{0031},
we have a group scheme morphism
	\begin{equation} \label{0020}
		N^{\natural, S'} \to N^{\natural, S}
	\end{equation}
over $\Order_{v'}$ for each $v' \in S' \setminus S$
and hence over the whole $X$.

Let $g \colon \Spec K_{\et} \to X_{\et}$ be
the natural morphism on the small \'etale sites.
Then we similarly have a morphism
	\begin{equation} \label{0032}
			g_{\ast} N_{K}^{\natural}
		\to
			N^{\natural, S}
	\end{equation}
compatible with \eqref{0020}.
The morphisms \eqref{0020} and \eqref{0032} are
the unique extensions of the identity morphism
$N_{K}^{\natural} \to N_{K}^{\natural}$.

With these constructions, we can describe $R^{S}(N)$:

\begin{proposition} \label{0008}
	Assume that $S \ne \emptyset$.
	Then there exists a canonical isomorphism
	$R^{S}(N) \cong H^{1}(X, N^{\natural, S})$
	compatible with \eqref{0021} and \eqref{0020}.
\end{proposition}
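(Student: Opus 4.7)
The plan is to bridge $R^S(N)$ and $H^1(X, N^{\natural, S})$ through the auxiliary ``fine Selmer of $N^\natural$''
\[
    R^S(N^\natural) := \Ker\bigl(H^1(U, N^\natural) \to \bigoplus_{v \in S} H^1(K_v, N_K^\natural)\bigr).
\]
First, I would identify $H^1(X, N^{\natural, S}) \cong R^S(N^\natural)$ via the fppf localization sequence for the open immersion $U \hookrightarrow X$ with complement $S$. For each $v \in S$, the sheaf $N^{\natural, S}$ restricts on $\Spec \Order_v$ to $j_{v,*}N_K^\natural$, which is \'etale by Proposition \ref{0031}; since $k$ is algebraically closed, $\Order_v$ is strictly henselian, so $H^n(\Order_v, N^{\natural, S}) = 0$ for $n \ge 1$ (fppf cohomology of an \'etale sheaf here reduces to \'etale cohomology), while $H^0(\Order_v, N^{\natural, S}) = N_K^\natural(K_v)$ maps tautologically onto $H^0(K_v, N_K^\natural)$. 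The excision sequence then gives $H^i_v(\Order_v, N^{\natural, S}) = 0$ for $i \le 1$ and $H^2_v(\Order_v, N^{\natural, S}) \cong H^1(K_v, N_K^\natural)$, and substituting into the localization sequence for $U \subset X$ immediately yields $H^1(X, N^{\natural, S}) \cong R^S(N^\natural)$.

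Second, I would show $R^S(N^\natural) \cong R^S(N)$ via the natural map. The short exact sequence $0 \to N^\natural \to N \to N^{/\natural} \to 0$ over $U$ satisfies $H^0(U, N^{/\natural}) = 0 = H^0(K_v, N_K^{/\natural})$, because $(N_K^{/\natural})^\natural = 0$ and any nontrivial $K$- or $K_v$-point of $N_K^{/\natural}$ would generate a nonzero \'etale, hence smooth, closed subgroup. The long exact cohomology sequences therefore begin $0 \to H^1(\var, N^\natural) \to H^1(\var, N) \to H^1(\var, N^{/\natural})$, and a kernel diagram chase on the commutative square whose columns are the localization maps at $S$ yields an exact sequence $0 \to R^S(N^\natural) \to R^S(N) \to R^S(N^{/\natural})$. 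It then suffices to show $R^S(N^{/\natural}) = 0$: by the identification in \eqref{0022}, $R^S(N^{/\natural})$ embeds into $H^1(K, N_K^{/\natural})$, and, picking any $v \in S$ (possible since $S \ne \emptyset$), the extension $K_v/K$ is separable (as $\Order_v$ is excellent over the perfect field $k$) and $(N_K^{/\natural})^\natural = 0$, so Proposition \ref{0002} makes $H^1(K, N_K^{/\natural}) \to H^1(K_v, N_K^{/\natural})$ injective; combined with the triviality of the image of $R^S(N^{/\natural})$ in $H^1(K_v, N_K^{/\natural})$, this forces $R^S(N^{/\natural}) = 0$.

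Compatibility of the resulting isomorphism $R^S(N) \cong H^1(X, N^{\natural, S})$ with \eqref{0021} and \eqref{0020} is automatic from the naturality of all the ingredients: the localization sequence, the connecting maps, the schematic closure $N^\natural$, and the direct image $j_{v,*}$. The main obstacle I expect is the cohomological computation at $v \in S$: one must carefully handle the fact that $N^{\natural, S}$ is only quasi-finite flat (not finite) over $X$, and the vanishing of its higher cohomology on $\Spec \Order_v$ rests crucially on both the \'etaleness of $j_{v,*}N_K^\natural$ from Proposition \ref{0031} and the strict henselianness of $\Order_v$ coming from $k$ being algebraically closed.
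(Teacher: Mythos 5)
Your proposal is correct and follows essentially the same route as the paper: reduce to $N^{\natural}$ via the exact sequence $0 \to N^{\natural} \to N \to N^{/\natural} \to 0$ and kill $R^{S}(N^{/\natural})$ with Proposition \ref{0002}, then identify $R^{S}(N^{\natural})$ with $H^{1}(X, N^{\natural, S})$ using strict henselianness of $\Order_{v}$ and the \'etaleness of $j_{v,\ast}N_{K}^{\natural}$ at $v \in S$. The only differences are cosmetic: you perform the two steps in the opposite order and use excision along $S$ (the second form of \eqref{0022}) where the paper uses the localization sequence over all closed points (the first form), which are equivalent.
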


\begin{proof}
	Set $N^{/ \natural} = N / N^{\natural}$,
	which is a finite flat group scheme over $U$.
	The exact sequence
	$0 \to N^{\natural} \to N \to N^{/ \natural} \to 0$
	induces exact sequences
		\begin{gather*}
					0
				\to
					H^{1}(K, N^{\natural})
				\to
					H^{1}(K, N)
				\to
					H^{1}(K, N^{/ \natural}),
			\\
					0
				\to
					H^{1}(K_{v}, N^{\natural})
				\to
					H^{1}(K_{v}, N)
				\to
					H^{1}(K_{v}, N^{/ \natural}).
		\end{gather*}
	For any $v \in U_{0}$,
	the same exact sequence induces an exact sequence
		\[
				0
			\to
				H^{2}_{v}(\Order_{v}, N^{\natural})
			\to
				H^{2}_{v}(\Order_{v}, N)
			\to
				H^{2}_{v}(\Order_{v}, N^{/ \natural}).
		\]
	Therefore we have an exact sequence
		\[
				0
			\to
				R^{S}(N^{\natural})
			\to
				R^{S}(N)
			\to
				R^{S}(N^{/ \natural}).
		\]
	But, since the extension $K_{v} / K$ is separable,
        the map $H^{1}(K, N^{/ \natural}) \to H^{1}(K_{v}, N^{/ \natural})$
	is injective for any $v \in S$ ($\ne \emptyset$)
	by Proposition \ref{0002}.
	Hence $R^{S}(N^{/ \natural}) = 0$,
	so $R^{S}(N^{\natural}) \isomto R^{S}(N)$.
	
	On the other hand, we have
	$H^{0}(\Order_{v}, N^{\natural, S}) \cong H^{0}(K_{v}, N^{\natural})$
	and $H^{i}(\Order_{v}, N^{\natural, S}) = 0$
	for $v \in S$ and $i \ge 1$
	since $k$ is algebraically closed and thus $\Order_{v}$ is strictly henselian.
	Hence the long exact sequence
		\[
				\cdots
			\to
				\bigoplus_{v \in X_{0}}
					H^{i}_{v}(\Order_{v}, N^{\natural, S})
			\to
				H^{i}(X, N^{\natural, S})
			\to
				H^{i}(K, N^{\natural})
			\to
				\cdots
		\]
	(recall that the generic fiber of $N^{\natural, S}$ is
	$N^{\natural}$ or, more precisely, $N_{K}^{\natural}$)
	shows that $H^{1}(X, N^{\natural, S})$ is
	isomorphic to $R^{S}(N^{\natural})$.
	The compatibility with \eqref{0021} and \eqref{0020}
	follows from the construction.
\end{proof}

\begin{proposition} \label{0011}
	Assume that $S \ne \emptyset$ and $N^{\natural}$ is \'etale over $U$.
	Then \eqref{0032} is an isomorphism
	and \eqref{0021} and \eqref{0020} are isomorphisms for any finite $S' \supset S$.
	In particular,
	$N^{\natural, S} \cong g_{\ast} N_{K}^{\natural}$
	is \'etale over $X$
	and $R^{S}(N) \cong H^{1}(X, g_{\ast} N_{K}^{\natural})$ is finite.
\end{proposition}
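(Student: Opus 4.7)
The plan is to identify all three objects $N^{\natural, S}$, $N^{\natural, S'}$, and $g_{\ast} N_{K}^{\natural}$ as the \emph{same} étale group scheme over $X$, namely the one obtained by gluing $N^{\natural}$ over $U$ with the direct images $j_{v, \ast}(N_{K}^{\natural})$ at each point of $S$ (and of $S' \setminus S$, respectively). Once this common identification is established, the claims that \eqref{0032} and \eqref{0020} are isomorphisms are forced by the uniqueness clause of Proposition \ref{0031}\eqref{0033}, the analogous claim for \eqref{0021} follows by applying $H^{1}(X, \var)$ and invoking Proposition \ref{0008}, and finiteness of $R^{S}(N)$ reduces to the standard finiteness of étale cohomology of a constructible sheaf of finite abelian groups on a proper smooth curve over an algebraically closed field.

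First I would verify the local picture at every $v \in X_{0}$: the restriction of the sheaf $g_{\ast} N_{K}^{\natural}$ to $\Spec \Order_{v, \et}$ is canonically represented by $j_{v, \ast}(N_{K}^{\natural})$, by compatibility of direct image along the generic inclusion with étale localization. For $v \in U_{0}$, the étaleness of $N^{\natural}|_{\Order_{v}}$ combined with Proposition \ref{0031}\eqref{0034} applied to $V = N^{\natural}|_{\Order_{v}}$ yields $j_{v, \ast}(N_{K}^{\natural}) \isomto N^{\natural}|_{\Order_{v}}$. For $v \in S$, this same $j_{v, \ast}(N_{K}^{\natural})$ is, by the very definition of $N^{\natural, S}$, equal to $N^{\natural, S}|_{\Order_{v}}$. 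Thus $g_{\ast} N_{K}^{\natural}$ and $N^{\natural, S}$ agree locally over every point of $X$, and both extend the identity on the generic fibre $N_{K}^{\natural}$; the uniqueness in Proposition \ref{0031}\eqref{0033} then forces the canonical morphism \eqref{0032} to be an isomorphism. In particular $N^{\natural, S}$ is étale over $X$, being locally étale.

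Since $N^{\natural}|_{U'}$ remains étale for any open $U' \subset U$, the identical argument applied to a finite $S' \supset S$ gives $g_{\ast} N_{K}^{\natural} \isomto N^{\natural, S'}$; composing with the inverse of \eqref{0032} shows that \eqref{0020} is an isomorphism, and then \eqref{0021} is an isomorphism via Proposition \ref{0008}. Finally, $g_{\ast} N_{K}^{\natural}$ is a constructible étale sheaf of finite abelian groups on the proper smooth curve $X$ over the algebraically closed field $k$, so $H^{1}(X, g_{\ast} N_{K}^{\natural}) \cong R^{S}(N)$ is finite by the standard finiteness theorem for étale cohomology. The main difficulty I anticipate is not conceptual but bookkeeping: one must keep straight the small étale sites of $\Spec \Order_{v}$ (the completed local ring), of its henselization, and of $X$ when comparing $(g_{\ast} N_{K}^{\natural})|_{\Order_{v}}$ with $j_{v, \ast}(N_{K}^{\natural})$, and check that the local isomorphisms furnished by Proposition \ref{0031} glue coherently into a global morphism extending the generic identity. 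Once that compatibility is in hand, the entire proposition becomes a formal consequence of Propositions \ref{0031} and \ref{0008}.
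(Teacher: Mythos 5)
Your proposal is correct and follows essentially the same route as the paper: the isomorphisms \eqref{0032}, \eqref{0020}, \eqref{0021} come from Proposition \ref{0031} \eqref{0034} (together with the compatibility in Proposition \ref{0008}), and finiteness comes from constructibility of $g_{\ast} N_{K}^{\natural}$ and the finiteness theorem for \'etale cohomology on the proper curve $X$. You merely spell out the pointwise gluing checks over the rings $\Order_{v}$ that the paper leaves implicit.
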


\begin{proof}
	The statements about \eqref{0032}, \eqref{0021} and \eqref{0020}
	follow from Proposition \ref{0031} \eqref{0034}.
	Since $g_{\ast} N_{K}^{\natural}$ is a constructible sheaf on $X_{\et}$,
	the group $H^{1}(X, g_{\ast} N_{K}^{\natural})$ is finite
	by \cite[Chapter VI, Theorem 2.1]{Mil80}.
\end{proof}


\section{%
	\texorpdfstring{Fine Selmer groups of $p$-divisible groups and abelian varieties}
	{Fine Selmer groups of p-divisible groups and abelian varieties}
}
\label{0025}

Let $U \subset X$ and $S = X \setminus U$ as before.
Let $G$ be a $p$-divisible group over $U$ with generic fiber $G_{K}$.
For $n \ge 0$, we denote $R^{S}(G[p^{n}])$ as $R^{S}_{n}(G)$.
The same formula as \eqref{0022} defines
the fine Selmer group $R^{S}(G)$ of $G$,
which is the direct limit of $R^{S}_{n}(G)$ in $n$.
Let $G^{\natural}$, $G^{\natural, S}$ and $G_{K}^{\natural}$ be
the direct limits of $G[p^{n}]^{\natural}$, $G[p^{n}]^{\natural, S}$
and $G_{K}[p^{n}]^{\natural}$, respectively,
in $\Ab(U_{\fppf})$, $\Ab(X_{\fppf})$
and $\Ab(K_{\fppf})$ (or in $\Ab(K_{\et})$), respectively.
For any finite $S' \supset S$, we have natural morphisms
	\begin{equation} \label{0027}
		\begin{gathered}
					g_{\ast} G_{K}^{\natural}
				\to
					G^{\natural, S'}
				\to
					G^{\natural, S},
			\\
				R^{S'}(G) \into R^{S}(G).
		\end{gathered}
	\end{equation}

The next two propositions (Propositions \ref{0007} and \ref{0009})
allow us to control $R^{S}(G)$ by $R^{S}_{1}(G)$.

\begin{proposition} \label{0007}
	The kernel and the cokernel of the natural map
	$R^{S}_{1}(G) \to R^{S}(G)[p]$
	are finite.
\end{proposition}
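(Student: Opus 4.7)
The plan is to exploit the short exact sequence
$0 \to G[p] \to G \xrightarrow{p} G \to 0$
of fppf sheaves on $U$, and reduce the problem via the snake lemma to a finiteness statement about the relevant $H^{0}$-groups of $G$.

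First I would extract three long-exact-sequence consequences. Restricting the displayed sequence to $\Spec K$ and to $\Spec K_{v}$ for $v \in S$ gives
$$0 \to H^{0}(K', G)/p \to H^{1}(K', G[p]) \to H^{1}(K', G)[p] \to 0$$
for $K' = K$ or $K' = K_{v}$. For $v \in U_{0}$, the vanishing $H^{1}_{v}(\Order_{v}, G[p^{n}]) = 0$ recalled in Section \ref{0053} (which holds for any finite flat group scheme) passes to the filtered colimit to give $H^{1}_{v}(\Order_{v}, G) = 0$, so the local long exact sequence collapses to an isomorphism $H^{2}_{v}(\Order_{v}, G[p]) \isomto H^{2}_{v}(\Order_{v}, G)[p]$.

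Next I would assemble these into a commutative diagram with exact rows whose middle and right vertical maps are the localization maps defining $R^{S}_{1}(G)$ and $R^{S}(G)[p]$. The snake lemma then identifies the kernel of $R^{S}_{1}(G) \to R^{S}(G)[p]$ with a subgroup of $H^{0}(K, G)/p$, and embeds its cokernel into a quotient of $\bigoplus_{v \in S} H^{0}(K_{v}, G)/p$.

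The last step is to show that $H^{0}(K', G)/p$ is finite for each $K' \in \{K\} \cup \{K_{v} : v \in S\}$. The key observation is that $H^{0}(K', G)[p] = H^{0}(K', G[p]) = G[p](K')$, which is finite since $G[p]$ is a finite scheme over $K'$. By Pontryagin duality $H^{0}(K', G)^{\vee}/p \cong H^{0}(K', G)[p]^{\vee}$ is then finite, and topological Nakayama applied to the profinite $\Z_{p}$-module $H^{0}(K', G)^{\vee}$ shows that this dual is finitely generated over $\Z_{p}$. Dualizing back, $H^{0}(K', G)$ is an Artinian $\Z_{p}$-module of the form $(\Q_{p}/\Z_{p})^{d} \oplus F$ with $F$ finite, so $H^{0}(K', G)/p \cong F/p$ is finite. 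I anticipate this duality--Nakayama step to be the most delicate, being where the $p$-divisible structure of $G$ meets the claimed finiteness.
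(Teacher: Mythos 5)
Your proposal is correct and takes essentially the same route as the paper: the paper's proof also uses the multiplication-by-$p$ sequence $0 \to G[p] \to G \to G \to 0$ at $K$, at $K_{v}$ for $v \in S$, and for $H^{\ast}_{v}(\Order_{v},-)$ at $v \in U_{0}$, and concludes that $0 \to R^{S}_{1}(G) \to R^{S}(G) \stackrel{p}{\to} R^{S}(G)$ is exact up to finite groups. The only difference is that you spell out the snake-lemma bookkeeping and the finiteness of $H^{0}(K',G)/p$ (via $G[p](K')$ being finite), which the paper merely asserts as ``the kernel of the first map in each sequence is finite.''
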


\begin{proof}
	We have exact sequences
		\begin{gather*}
				H^{1}(K, G[p])
			\to
				H^{1}(K, G)
			\stackrel{p}{\to}
				H^{1}(K, G),
            \\
				H^{1}(K_{v}, G[p])
			\to
				H^{1}(K_{v}, G)
			\stackrel{p}{\to}
				H^{1}(K_{v}, G),
		\end{gather*}
	where the kernel of the first map in each sequence is finite.
	For any $v \in U_{0}$,
	we have an exact sequence
		\[
				0
			\to
				H^{2}_{v}(\Order_{v}, G[p])
			\to
				H^{2}_{v}(\Order_{v}, G)
			\stackrel{p}{\to}
				H^{2}_{v}(\Order_{v}, G).
		\]
	Therefore the sequence
		\[
				0
			\to
				R^{S}_{1}(G)
			\to
				R^{S}(G)
			\stackrel{p}{\to}
				R^{S}(G)
		\]
	is exact up to finite groups,
	which proves the proposition.
\end{proof}

\begin{proposition} \label{0009}
	If $G[p]^{\natural}$ is \'etale over $U$,
	then $G[p^{n}]^{\natural}$ and $G^{\natural}$ are \'etale over $U$
	and $G[p^{n}]^{\natural, S}$ and $G^{\natural, S}$ are \'etale over $X$
	for all $n$.
\end{proposition}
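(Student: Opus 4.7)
The statement for $G^{\natural}$ and $G^{\natural, S}$ follows from the $G[p^{n}]^{\natural}$ statement by passing to the direct limit, and the $(\cdot)^{S}$ variant follows from the $U$ version by applying Proposition \ref{0011} with $N = G[p^{n}]$. So the substantive task is to show that $G[p^{n}]^{\natural}$ is \'etale over $U$ for every $n$.

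Since \'etaleness is local on the base, I fix a closed point $v \in U$ and work over the completion $\Order_{v}$, whose residue field is the algebraically closed field $k$. The plan is to identify $G[p^{n}]^{\natural}_{\Order_{v}}$ with the $p^{n}$-torsion of an \'etale $p$-divisible group, obtained from the standard connected-\'etale short exact sequence
\[
    0 \to G^{0} \to G \to G^{\et} \to 0
\]
of $p$-divisible groups over $\Order_{v}$ (with $G^{\et}$ \'etale lifting the maximal \'etale quotient of $G_{k}$). Writing $p^{r}$ for the (constant) rank of $G[p]^{\natural}$, the hypothesis forces the rank $p^{r}$ subgroup $G[p]^{\natural}_{v}$ to sit inside the maximal \'etale subgroup of $G_{v}[p]$, so that $G^{\et}$ has height $\ge r$ on the special fiber; combined with Grothendieck's semicontinuity of Newton polygons and the fact that $G[p]^{\natural}_{K_{v}}$ is the maximal \'etale subgroup of $G_{K_{v}}[p]$ (which pins the generic slope-$0$ multiplicity to $r$), this pins both fiber heights to exactly $r$. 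Consequently the generic fiber of $G^{\et}$ coincides with the maximal \'etale quotient of $G_{K_{v}}$.

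The key identification is $G(K_{v}^{\sep}) = G^{\et}(K_{v}^{\sep})$ as unramified $\Gal(K_{v}^{\sep}/K_{v})$-modules: any finite connected commutative group scheme over a field $K_{v}$ has its unique $K_{v}^{\sep}$-point at the identity (its coordinate ring is local with residue field $K_{v}$), so $G^{0}(K_{v}^{\sep}) = 0$, while the unramifiedness follows from $G^{\et}$ being \'etale over $\Order_{v}$. In particular $G[p^{n}]^{\natural}_{K_{v}}$ and $G^{\et}[p^{n}]_{K_{v}}$ have the same underlying Galois module, so Proposition \ref{0031} identifies the \'etale $\Order_{v}$-group schemes $j_{v,\ast}(G[p^{n}]^{\natural}_{K_{v}}) \cong G^{\et}[p^{n}]$.

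Finally, the composite $\phi \colon G[p^{n}]^{\natural} \into G[p^{n}] \onto G^{\et}[p^{n}]$ and the Proposition \ref{0031} morphism $\psi \colon G^{\et}[p^{n}] \cong j_{v,\ast}(G[p^{n}]^{\natural}_{K_{v}}) \to G[p^{n}]^{\natural}$ are mutually inverse: both compositions restrict to the identity on the generic fiber, and since the coordinate ring of any finite flat $\Order_{v}$-scheme embeds into its localization at $K_{v}$, any endomorphism extending the identity generically is itself the identity. Hence $G[p^{n}]^{\natural} \cong G^{\et}[p^{n}]$ is \'etale over $\Order_{v}$, as required. The main obstacle is the lifting step: constructing the connected-\'etale sequence over $\Order_{v}$ in such a way that its generic fiber recovers the generic maximal \'etale quotient of $G_{K_{v}}$. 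It is precisely here that the hypothesis on $G[p]^{\natural}$ is used, via the constancy of the slope-$0$ multiplicity; once this is set up, the rest is a formal uniqueness argument built on Proposition \ref{0031}.
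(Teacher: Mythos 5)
Your reduction to proving that each $G[p^{n}]^{\natural}$ is \'etale over $U$, and the passage to the completed local rings $\Order_{v}$, are fine, but the core of your local argument contains a genuine error. Knowing that $G[p]^{\natural}_{K_{v}}$ is the maximal \'etale (equivalently, smooth) subgroup of $G[p]_{K_{v}}$ does \emph{not} pin the slope-$0$ multiplicity of the generic fibre to $r$: over the imperfect field $K_{v} \cong k((t))$ the maximal \'etale subgroup only injects into the maximal \'etale quotient, and this inclusion is strict in general. For the same reason, $G^{0}(K_{v}^{\sep}) = 0$ only gives injectivity of $G[p^{n}](K_{v}^{\sep}) \to G^{\et}[p^{n}](K_{v}^{\sep})$, not surjectivity: the connected--\'etale extension splits after a purely inseparable extension, but in general not after a separable one, the obstruction living in $H^{1}(K_{v}^{\sep}, G^{0}[p^{n}])$, which is nonzero for infinitesimal groups over imperfect fields (e.g.\ $H^{1}(K_{v}^{\sep}, \mu_{p}) \cong (K_{v}^{\sep})^{\times}/((K_{v}^{\sep})^{\times})^{p} \ne 0$ in characteristic $p$). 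Concretely, take $G = \mathcal{A}[p^{\infty}]$ for a non-isotrivial ordinary elliptic curve with $A(K^{\sep})[p] = 0$ (Voloch, as quoted after Corollary \ref{0040}) and let $v$ be a place of good ordinary reduction. Then $r = 0$ and the hypothesis of the proposition holds trivially; moreover the generic fibre of $G^{\et}$ really is the maximal \'etale quotient of $G_{K_{v}}$ (both heights equal $1$), so the ``lifting step'' you flag as the main obstacle is not the issue; yet $G^{\et}[p^{n}]$ has order $p^{n}$ while $G[p^{n}]^{\natural} = 0$. Hence your key identifications $j_{v,\ast}(G[p^{n}]^{\natural}_{K_{v}}) \cong G^{\et}[p^{n}]$ and $G[p^{n}]^{\natural} \cong G^{\et}[p^{n}]$ fail, and the proof collapses (the conclusion is true in this example, but only trivially).

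The \'etaleness hypothesis should be used not to control Newton polygons but to run an induction on $n$ directly on the schematic closures, which is what the paper does, entirely over $U$: multiplication by $p^{n}$ gives a sequence $0 \to G[p^{n}]^{\natural} \to G[p^{n+1}]^{\natural} \to G[p]^{\natural}$ with exact generic fibre; the schematic image of the right-hand map is a flat closed subgroup scheme of the \'etale group $G[p]^{\natural}$, hence \'etale, and $G[p^{n+1}]^{\natural}$ is faithfully flat over it with finite flat kernel; that kernel, being flat with generic fibre $G[p^{n}]^{\natural}_{K}$, equals $G[p^{n}]^{\natural}$, which is \'etale by the induction hypothesis, and an extension of an \'etale group by an \'etale group among finite flat group schemes is \'etale. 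If you insist on a local argument at each $\Order_{v}$, what you would actually need to prove is that $G[p^{n}]^{\natural} \to G^{\et}[p^{n}]$ is a closed immersion (equivalently that $G[p^{n}]^{\natural}$ itself is \'etale), and the \'etaleness of $G[p]^{\natural}$ must enter through an induction of the above kind rather than through slope considerations.
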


\begin{proof}
	It is enough to treat $G[p^{n}]^{\natural}$.
	Assume that $G[p^{n}]^{\natural}$ is \'etale
	over $U$ for some $n \ge 1$.
	We will prove the same for $G[p^{n + 1}]^{\natural}$.
	We have a natural sequence
            \[
                    0
                \to
                    G[p^{n}]^{\natural}
                \to
                    G[p^{n + 1}]^{\natural}
                \to
                    G[p]^{\natural}
            \]
	over $U$,
	whose generic fiber is exact.
	Since $G[p]^{\natural}$ is \'etale
	and $G[p^{n + 1}]^{\natural}$ is finite flat,
	it follows that the schematic image $G[p]^{\natural \prime}$ of the morphism
	$G[p^{n + 1}]^{\natural} \to G[p]^{\natural}$
	is \'etale and the induced morphism
	$G[p^{n + 1}]^{\natural} \to G[p]^{\natural \prime}$
	is faithfully flat.
	In particular, $\Ker(G[p^{n + 1}]^{\natural} \onto G[p]^{\natural \prime})$
	is finite flat over $U$.
	It follows that
	$\Ker(G[p^{n + 1}]^{\natural} \onto G[p]^{\natural \prime}) = G[p^{n}]^{\natural}$,
	which is \'etale over $U$.
	Therefore $G[p^{n + 1}]^{\natural}$ is \'etale over $U$.
	Induction then gives the result.
\end{proof}

Now we can describe $R^{S}(G)$:

\begin{proposition} \label{0028}
	Assume $S \ne \emptyset$.
	\begin{enumerate}
		\item \label{0036}
			There exists a canonical isomorphism
			$R^{S}(G) \cong H^{1}(X, G^{\natural, S})$.
		\item \label{0037}
			Assume moreover that $G[p]^{\natural}$ is \'etale over $U$.
			Then the morphisms \eqref{0027} are isomorphisms
			for any finite $S' \supset S$.
			The group $G^{\natural, S} \cong g_{\ast} G_{K}^{\natural}$ is \'etale over $X$
			and $R^{S}(G) \cong H^{1}(X, g_{\ast} G_{K}^{\natural})$ is of cofinite type.
	\end{enumerate}
\end{proposition}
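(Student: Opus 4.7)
My plan is to reduce both parts of the proposition to the finite-level statements (Propositions \ref{0008}, \ref{0009}, \ref{0011}, and \ref{0007}) and then pass to the direct limit in $n$. Throughout I use the direct-limit presentations $R^{S}(G) = \dirlim_{n} R^{S}_{n}(G)$ and $G^{\natural, S} = \dirlim_{n} G[p^{n}]^{\natural, S}$ recalled at the start of Section \ref{0025}; the transition maps on the latter arise from the inclusions $G[p^{n}] \into G[p^{n+1}]$ via the functoriality of the maximal smooth subgroup and of schematic closure over $U$, together with the uniqueness clause of Proposition \ref{0031} at places of $S$.

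For part \eqref{0036}, Proposition \ref{0008} applied to $N = G[p^{n}]$ supplies canonical isomorphisms $R^{S}_{n}(G) \isomto H^{1}(X, G[p^{n}]^{\natural, S})$ compatible with the transition maps in $n$. Since $X$ is a noetherian scheme and fppf cohomology of a noetherian scheme commutes with filtered colimits of abelian sheaves, passing to $\dirlim_{n}$ yields the canonical isomorphism $R^{S}(G) \cong H^{1}(X, G^{\natural, S})$.

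For part \eqref{0037}, Proposition \ref{0009} converts the hypothesis on $G[p]^{\natural}$ into \'etaleness of every $G[p^{n}]^{\natural}$ over $U$, so Proposition \ref{0011} applies at each finite level: for every $n$ and every finite $S' \supset S$, one obtains isomorphisms $g_{\ast}(G_{K}[p^{n}]^{\natural}) \isomto G[p^{n}]^{\natural, S'} \isomto G[p^{n}]^{\natural, S}$ of finite \'etale sheaves on $X$ and $R^{S'}(G[p^{n}]) \isomto R^{S}(G[p^{n}])$ of finite groups. Taking $\dirlim_{n}$ produces the isomorphisms \eqref{0027} for $G$ itself and identifies $G^{\natural, S}$ with the \'etale sheaf $g_{\ast} G_{K}^{\natural}$ on $X$. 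For cofiniteness over $\Z_{p}$, by the definition recalled in Theorem \ref{0014}\eqref{0015} it suffices to show $R^{S}(G)[p]$ is finite; Proposition \ref{0007} provides a map $R^{S}_{1}(G) \to R^{S}(G)[p]$ with finite kernel and cokernel, and Proposition \ref{0011} applied to $N = G[p]$ forces $R^{S}_{1}(G) \cong H^{1}(X, g_{\ast} G_{K}[p]^{\natural})$ to be finite.

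The main point of technical care will be the interchange of $H^{1}(X, \var)$ with the filtered colimit defining $G^{\natural, S}$: under the \'etaleness hypothesis of \eqref{0037} this takes place in the small \'etale site of the noetherian scheme $X$, where the commutation is standard, and in the generality of \eqref{0036} the analogous commutation in the fppf setting on a noetherian scheme must be invoked. The remaining verifications---functoriality of the $\natural$ and $\natural, S$ constructions under $G[p^{n}] \into G[p^{n+1}]$, and the compatibility of Proposition \ref{0008}'s isomorphism with these transition maps---are routine given the uniqueness clause of Proposition \ref{0031}.
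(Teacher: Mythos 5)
Your proposal is correct and takes essentially the same route as the paper, whose proof is exactly the one-line reduction ``apply Propositions \ref{0008} and \ref{0011} to each $G[p^{n}]$ using Propositions \ref{0007} and \ref{0009}.'' The only point you make explicit beyond this---the commutation of fppf (resp.\ \'etale) cohomology of the noetherian curve $X$ with the filtered colimits defining $G^{\natural, S}$ and $R^{S}(G)$, and the finiteness of $R^{S}(G)[p]$ via Propositions \ref{0007} and \ref{0011}---is precisely what the paper's limit argument uses implicitly, and your justification of it is correct.
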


\begin{proof}
	Apply Propositions \ref{0008} and \ref{0011}
	to each $G[p^{n}]$
	using Propositions \ref{0007} and \ref{0009}.
\end{proof}

\begin{proposition} \label{0012}
	Let $n \ge 0$.
	Assume that $G(K^{\sep})$ is killed by $p^{n}$.
	Assume also that $S \ne \emptyset$.
	Then $R^{S}_{n}(G) \isomto R^{S}(G)$,
	which is killed by $p^{n}$.
	If furthermore $G[p]^{\natural}$ is \'etale over $U$,
	then $R^{S}(G)$ is finite.
\end{proposition}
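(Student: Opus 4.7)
The plan is to reduce $R^{S}(G)$ to the finite level $R^{S}_{n}(G) = R^{S}(G[p^{n}])$ by exploiting the hypothesis that $G(K^{\sep})$ is killed by $p^{n}$, and then to invoke Propositions \ref{0008}, \ref{0011}, and \ref{0028}. First I would observe that, as an \'etale group scheme over $K$, $G_{K}^{\natural}$ corresponds to the Galois module $G(K^{\sep})$, which by hypothesis is killed by $p^{n}$; hence $G_{K}^{\natural} = G_{K}[p^{n}]^{\natural}$, and the same identity holds at every truncation level $p^{m}$ with $m \geq n$. I would then promote this identification to the integral level over $U$: for any $m \geq n$, both $G[p^{n}]^{\natural}$ (viewed inside $G[p^{m}]$ via $G[p^{n}] \hookrightarrow G[p^{m}]$) and $G[p^{m}]^{\natural}$ are $U$-flat closed subschemes of $G[p^{m}]$ with the common generic fiber $G_{K}^{\natural}$, so by the uniqueness of schematic closures they coincide in $G[p^{m}]$. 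Passing to the direct limit in $m$ yields $G^{\natural} = G[p^{n}]^{\natural}$ over $U$; the same stabilization carries over to the local extensions at each $v \in S$ by the uniqueness clause in Proposition \ref{0031}\eqref{0033}, so $G^{\natural, S} = G[p^{n}]^{\natural, S}$ as quasi-finite flat separated group schemes over $X$. In particular, $G^{\natural, S}$ is killed by $p^{n}$.

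With this stabilization in hand, the main assertion falls out. By Proposition \ref{0028}\eqref{0036} applied to $G$ and Proposition \ref{0008} applied to $N = G[p^{n}]$,
\[
	R^{S}(G) \cong H^{1}(X, G^{\natural, S}) = H^{1}(X, G[p^{n}]^{\natural, S}) \cong R^{S}_{n}(G),
\]
and $R^{S}(G)$ inherits annihilation by $p^{n}$ from $G^{\natural, S}$. For the final clause, if $G[p]^{\natural}$ is \'etale over $U$, then Proposition \ref{0009} propagates \'etaleness to $G[p^{n}]^{\natural}$, and Proposition \ref{0011} applied to $N = G[p^{n}]$ delivers the finiteness of $R^{S}_{n}(G) = R^{S}(G)$.

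I expect the only genuine subtlety to be the step that identifies the schematic closures across the varying ambient schemes $G[p^{m}]$: one must verify that the closure of $G_{K}^{\natural}$ taken inside the smaller $G[p^{n}]$ and inside the larger $G[p^{m}]$ yield the same closed subscheme of $G[p^{m}]$. This is handled by the general uniqueness of flat closed subschemes with a prescribed generic fiber, together with the uniqueness clause of Proposition \ref{0031}\eqref{0033} to extend the identification over the points of $S$. Once that point is settled, the remainder of the argument is essentially formal.
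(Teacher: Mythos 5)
Your proposal is correct and follows essentially the same route as the paper: the hypothesis that $G(K^{\sep})$ is killed by $p^{n}$ forces the schematic closures to stabilize, $G[p^{n}]^{\natural} = G[p^{n+1}]^{\natural} = \cdots$, hence $G^{\natural, S} = G[p^{n}]^{\natural, S}$, and the conclusion then follows from Propositions \ref{0008} and \ref{0011} (with \ref{0009} supplying the \'etaleness of $G[p^{n}]^{\natural}$). You merely spell out the stabilization step, via uniqueness of flat closed subschemes over the Dedekind base $U$ with prescribed generic fiber and the uniqueness clause of Proposition \ref{0031}, which the paper leaves implicit.
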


\begin{proof}
	By assumption, we have
	$G[p^{n}]^{\natural} = G[p^{n + 1}]^{\natural} = \cdots$.
	Hence $G^{\natural, S} = G[p^{n}]^{\natural, S}$.
		Therefore the result follows from
	Propositions \ref{0008} and \ref{0011}.
\end{proof}

This proposition for $n = 0$ means that
if $G(K^{\sep}) = 0$ and $S \ne \emptyset$,
then $R^{S}(G) = 0$.

Let $A$ be an abelian variety over $K$.
Let $S$ be a finite set of places of $K$
containing all places of bad reduction for $A$.
Set $U = X \setminus S$,
so that $A$ extends to an abelian scheme $\mathcal{A}$ over $U$.
Applying the constructions of this section
to $G = \mathcal{A}[p^{\infty}]$,
we have the $S$-fine Selmer groups
$R^{S}_{n}(A / K) := R^{S}_{n}(\mathcal{A}[p^{\infty}])$ and
$R^{S}(A / K) := R^{S}(\mathcal{A}[p^{\infty}])$ of $A$.
These definitions are consistent with
the definition of $R^{S}(A / K)$ (or $R^{S}(A / K_{\infty})$) in Section \ref{0049},
since for any $v \in U_{0}$, we have
$H^{1}_{v}(\Order_{v}, \mathcal{A}) = 0$,
$H^{1}(K_{v}, A) \isomto H^{2}_{v}(\Order_{v}, \mathcal{A})$
and so
	\[
			H^{2}_{v}(\Order_{v}, \mathcal{A}[p^{n}])
		\isomto
			H^{2}_{v}(\Order_{v}, \mathcal{A})[p^{n}]
		\cong
			H^{1}(K_{v}, A)[p^{n}].
	\]
Hence the results in this and the previous sections can be interpreted
as results on $R^{S}_{n}(A / K)$ and $R^{S}(A / K)$.


\section{Connection to the finite base field case}
\label{0024}

Let $X$ be a proper smooth geometrically connected curve
over a finite field $k$ of characteristic $p$.
Let $k_{\infty}$ be the $\Z_{p}$-extension of $k$.
Let $K_{\infty} = K k_{\infty}$ be the unramified $\Z_{p}$-extension of $K$.
Let $S$ be a finite set of closed points of $X$
and set $U = X \setminus S$.
Let $G$ be a $p$-divisible group over $U$.
We have defined the fine Selmer groups
$R^{S}_{n}(G \times_{k} \closure{k})$
and $R^{S}(G \times_{k} \closure{k})$
of $G \times_{k} \closure{k}$ over $U \times_{k} \closure{k}$
in Section \ref{0025}
(where the upper script $S$ is more precisely
the inverse image of $S$ in $X \times_{k} \closure{k}$).
They have natural $\Gal(\closure{k} / k) \cong \Gal(K \closure{k} / K)$-actions.
The fine Selmer group
$R^{S}(G \times_{k} k_{\infty})$ is defined similarly,
but it is a $\Gal(K_{\infty} / K)$-module.

\begin{proposition} \label{0052}
	We have
		\[
				R^{S}_{n}(G \times_{k} \closure{k})^{\Gal(\closure{k} / k_{\infty})}
			\cong
				R^{S}_{n}(G \times_{k} k_{\infty})
		\]
	as $\Gal(K_{\infty} / K)$-modules.
	The same holds with $R^{S}_{n}$ replaced by $R^{S}$.
\end{proposition}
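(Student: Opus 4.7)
The plan is to apply Galois descent term by term to the defining kernel \eqref{0022} for $R^{S}_{n}$. Set $\Gamma' := \Gal(\closure{k} / k_{\infty}) = \Gal(K \closure{k} / K_{\infty})$. Inside $\Gal(\closure{k} / k) \cong \hat{\Z}$, the subgroup $\Gamma'$ is the kernel of the projection to $\Gal(k_{\infty} / k) \cong \Z_{p}$, so $\Gamma' \cong \prod_{\ell \ne p} \Z_{\ell}$ and its pro-order is prime to $p$. The essential cohomological input will be that $H^{i}(\Gamma', M) = 0$ for all $i \ge 1$ and every discrete $p$-primary $\Gamma'$-module $M$, and that the same vanishing holds for every closed subgroup of $\Gamma'$.

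First I will handle the global term: inflation-restriction for $\Gamma'$ acting on $G[p^{n}]$ collapses, via the above vanishing, to $H^{1}(K_{\infty}, G[p^{n}]) \isomto H^{1}(K \closure{k}, G[p^{n}])^{\Gamma'}$. Next, for $v \in S_{\infty}$, the places of $X \times_{k} \closure{k}$ above $v$ form a single $\Gamma'$-orbit; choosing a lift $w_{0}$ with decomposition group $D_{w_{0}} \subset \Gamma'$, one has $\Gal(K_{\closure{k}, w_{0}} / K_{\infty, v}) = D_{w_{0}}$. The direct sum $\bigoplus_{w \mid v} H^{1}(K_{\closure{k}, w}, G[p^{n}])$ is then the $\Gamma'$-module induced from $H^{1}(K_{\closure{k}, w_{0}}, G[p^{n}])$ along $D_{w_{0}} \subset \Gamma'$, so its $\Gamma'$-invariants reduce to
\[
    H^{1}(K_{\closure{k}, w_{0}}, G[p^{n}])^{D_{w_{0}}} \cong H^{1}(K_{\infty, v}, G[p^{n}]),
\]
the last isomorphism being a second application of inflation-restriction for $D_{w_{0}}$, which is again of prime-to-$p$ pro-order. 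The same argument, applied through the identification $H^{2}_{w}(\Order_{\closure{k}, w}, G[p^{n}]) \cong H^{1}(K_{\closure{k}, w}, G[p^{n}]) / H^{1}(\Order_{\closure{k}, w}, G[p^{n}])$ and Galois descent on each factor, handles the local-support contributions for $v \in U_{\infty, 0}$.

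Combining these term-wise identifications using left exactness of $\Gamma'$-invariants identifies the $\Gamma'$-fixed part of the kernel diagram \eqref{0022} for $G \times_{k} \closure{k}$ with the corresponding diagram for $G \times_{k} k_{\infty}$, giving the claimed isomorphism on kernels. Equivariance for $\Gal(K_{\infty}/K)$ is automatic because $\Gamma'$ is normal in the abelian group $\Gal(K\closure{k}/K) \cong \hat{\Z}$, so $\Gal(K_{\infty}/K)$ naturally acts on the $\Gamma'$-invariants. For the $R^{S}$ statement, I will then pass to the direct limit in $n$, using that $H^{0}(\Gamma', \var)$ commutes with filtered colimits of discrete modules. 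The main delicate step is the induced-module identification for the local sums at $v \in S_{\infty}$ (and the infinite family at $v \in U_{\infty, 0}$): one has to check that the global-to-local restriction map is $\Gamma'$-equivariant for the permutation action on the fiber over $v$ and that the decomposition group is identified correctly. This is routine bookkeeping but depends on how the constant extension $\closure{k}/k_{\infty}$ behaves place by place (for $v_{0} \in S$ the fiber size is controlled by $[k_{v_{0}} : k_{v_{0}} \cap k_{\infty}]$).
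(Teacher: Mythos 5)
Your argument is essentially the paper's: its proof consists of the single observation that the fine Selmer groups are $p$-primary torsion while $\Gal(\closure{k}/k_{\infty})$ is pro-prime-to-$p$, so everything follows from the Hochschild--Serre spectral sequence for fppf cohomology \cite[Chapter III, Remark 2.21 (a), (b)]{Mil80} --- precisely the term-wise descent you carry out. The only points to make explicit are that, since $G[p^{n}]$ need not be \'etale, your ``inflation--restriction'' and induced-module steps must be read as the five-term sequences of that fppf Hochschild--Serre spectral sequence (not Galois cohomology of the module $G[p^{n}](K^{\sep})$), and the identification $\Gal(K_{\closure{k},w_{0}}/K_{\infty,v}) = D_{w_{0}}$ should be made with henselian rather than completed local fields (the completed extension is not algebraic), which is harmless because the local cohomology terms are unchanged under this replacement, as noted in Section \ref{0053}.
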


\begin{proof}
	Since these fine Selmer groups are torsion and $p$-primary
	and the group $\Gal(\closure{k} / k_{\infty})$ is pro-prime-to-$p$,
	the proposition follows from the Hochschild--Serre spectral sequence
	for fppf cohomology groups \cite[Chapter III, Remark 2.21 (a), (b)]{Mil80}.
\end{proof}

Therefore all the cofiniteness, finiteness and vanishing results
in the previous sections apply to $R^{S}_{n}(G \times_{k} k_{\infty})$.
For $G = \mathcal{A}[p^{\infty}]$,
Propositions \ref{0028} and \ref{0012} prove
Theorems \ref{0014} and \ref{0039}, respectively.

\begin{remark}
	The isomorphism in Proposition \ref{0008} over $\closure{k}$
	commutes with the action of $\Gal(\closure{k} / k)$.
	Hence we have isomorphisms
		\[
				R^{S}_{n}(G \times_{k} \closure{k})
			\cong
				H^{1}(X \times_{k} \closure{k}, G[p^{n}]^{\natural, S}),
		\]
		\[
				R^{S}_{n}(G \times_{k} k_{\infty})
			\cong
				H^{1}(X \times_{k} k_{\infty}, G[p^{n}]^{\natural, S})
		\]
	as $\Gal(\closure{k} / k)$-modules
	and $\Gal(k_{\infty} / k)$-modules, respectively.
\end{remark}


\section{Examples and a remark}

\begin{example} \label{0044}
	We show here that the \'etaleness assumption of $\mathcal{A}[p]^{\natural}$
	in Theorem \ref{0014} is necessary
	and hence $S$ needs to contain more than just the bad places for $A$.
	
	Let $A'$ be any ordinary elliptic curve over $K$
	with places of good supersingular reduction.
	For example, we can take $K = k(t)$ of characteristic $3$ and
	$A' \colon y^{2} = x^{3} + t x^{2} - x$,
	which is non-isotrivial and hence ordinary
	and has $t = 0$ as a supersingular place.
	Let $A = A'^{(p)}$ be the Frobenius twist of $A'$,
	which is ordinary with places of good supersingular reduction
	(at the same places where $A'$ has good supersingular reduction).
	Let $S$ be the set of bad places for $A$ (or $A'$).
	We will show that $R^{S}(A / K_{\infty})$ is not of cofinite type.
	
	Replacing $k$ by $\closure{k}$ as in Section \ref{0024},
	it is enough to show that $R^{S}(A / K)$ is not of cofinite type
	when $k = \closure{k}$.
	Let $U = X \setminus S$,
	so that $A$ extends to an abelian scheme $\mathcal{A}$ over $U$.
	Then $A[p]^{\natural}$ is given by
	the kernel of the Verschiebung $V \colon A \to A'$.
	Its schematic closure $\mathcal{A}[p]^{\natural}$ in $\mathcal{A}[p]$
	has non-\'etale fiber at a supersingular place of $A$.
	By Propositions \ref{0028} \eqref{0036} and \ref{0007},
	it is enough to show that
	$R^{S}(\mathcal{A}[p]) \cong H^{1}(X, \mathcal{A}[p]^{\natural, S})$ is infinite.
	Let $U' \subset U$ be the locus where $\mathcal{A}[p]^{\natural}$ is \'etale
	and set $S' = X \setminus U'$.
	Consider the long exact sequence of fppf cohomology groups with compact support
		\[
				\cdots
			\to
				H^{i}_{c}(U', \mathcal{A}[p]^{\natural})
			\to
				H^{i}(X, \mathcal{A}[p]^{\natural, S})
			\to
				\bigoplus_{v \in S'}
					H^{i}(\Order_{v}, \mathcal{A}[p]^{\natural, S})
			\to
				\cdots
		\]
	(\cite[Chapter III, Proposition 0.4 (c), Remark 0.6 (b)]{Mil06},
	\cite[Propositions 2.7.4 and 2.7.5]{Suz20a},
	\cite[Proposition 2.1 (3)]{DH19}).
	The \'etaleness of $\mathcal{A}[p]^{\natural}$ over $U'$ implies that
	$H^{i}_{c}(U', \mathcal{A}[p]^{\natural})$ is finite for all $i$
	by \cite[Chapter VI, Theorem 2.1]{Mil80}.
	Hence it is enough to show that
		$
				H^{1}(\Order_{v}, \mathcal{A}[p]^{\natural, S})
			\cong
				H^{1}(\Order_{v}, \mathcal{A}[p]^{\natural})
		$
	is infinite for a supersingular place $v$ ($\in S' \setminus S$).
	Recall again that $\mathcal{A}[p]^{\natural}$ over $\Order_{v}$
	is finite flat with \'etale generic fiber and non-\'etale special fiber.
	Now for any finite flat group scheme $N$ over $\Order_{v}$
	with \'etale generic fiber,
	the group $H^{1}(\Order_{v}, N)$ is finite
	if and only if it is zero
	if and only if $N$ is \'etale over $\Order_{v}$
	by \cite[Propositions 3.7 and 3.2 (2)]{OS23}.
	Hence $H^{1}(\Order_{v}, \mathcal{A}[p]^{\natural})$ is infinite.

        One can explicitly calculate the $\mu$-invariant of $R^{S}(A / K_{\infty})$
        in terms of the $\Order_{v}$-length of
        $e^{\ast} \Omega^{1}_{\mathcal{A}[p^{n}]^{\natural} / \Order_{v}}$
        for supersingular $v \in S'$ and large enough $n$
        using \cite[Proposition 2.4.2]{LLSTT21} and
        \cite[Propositions 3.7]{OS23},
        where $e$ denotes the zero section
        for $\mathcal{A}[p^{n}]^{\natural} / \Order_{v}$.
\end{example}

\begin{example} \label{0043}
	In this example,
	we will see that the inclusions
	$0 \subset R^{S}(A / K) \subset \Sel(A / K)$
	are both strict even up to finite groups
	and for sufficiently large $S$.
	We will also calculate the characteristic ideals of their Pontryagin duals
	when $k$ is finite.
	Let $X$ and $A$ be ordinary elliptic curves over $k$.
	Let $S$ be any non-empty finite set of places of $K$.
	View $A$ also as $A \times_{k} X$ or $A \times_{k} K$.
	
	First assume that $k = \closure{k}$.
	Note that the connected-\'etale sequence
	$0 \to A[p^{\infty}]^{0} \to A[p^{\infty}] \to \pi_{0}(A[p^{\infty}]) \to 0$
	canonically splits (since it is over $k$).
	Therefore $A[p^{\infty}]^{\natural} \cong \pi_{0}(A[p^{\infty}])$
	and hence $R^{S}(A / K) \cong H^{1}(X, \pi_{0}(A[p^{\infty}]))$.
	We have
		\begin{align*}
					H^{1}(X, \pi_{0}(A[p^{\infty}]))
			&	\cong
					H^{1}(X, \Q_{p} / \Z_{p}) \tensor_{\Z_{p}} T_{p} \pi_{0}(A[p^{\infty}])
			\\
			&	\cong
						(T_{p} \pi_{0}(X[p^{\infty}]))^{\vee}
					\tensor_{\Z_{p}}
						T_{p} \pi_{0}(A[p^{\infty}]),
		\end{align*}
	where $\vee$ denotes the Pontryagin dual.
	Hence
		\[
				R^{S}(A / K)^{\vee}
			\cong
				\Hom_{\Z_{p}} \bigl(
					T_{p} \pi_{0}(A[p^{\infty}]),
					T_{p} \pi_{0}(X[p^{\infty}])
				\bigr),
		\]
	which has rank $1$ over $\Z_{p}$,
	while a similar calculation with $\Sel(A / K) \cong H^{1}(X, A[p^{\infty}])$
	shows that
		\begin{align*}
					\Sel(A / K)^{\vee}
				\cong
			&			\Hom_{\Z_{p}} \bigl(
							T_{p} \pi_{0}(A[p^{\infty}]),
							T_{p} \pi_{0}(X[p^{\infty}])
						\bigr)
			\\
			&	\quad
					\oplus
						\Hom_{\Z_{p}} \bigl(
							T_{p} \pi_{0}(X[p^{\infty}]),
							T_{p} \pi_{0}(A[p^{\infty}])
						\bigr),
		\end{align*}
	which has rank $2$.
        Note that the natural injection from
        $A(K) \tensor \Q_{p} / \Z_{p} \cong
        \Hom_{k}(X, A) \tensor \Q_{p} / \Z_{p}
        \cong \Hom_{k}(A, X) \tensor \Q_{p} / \Z_{p}$
        into $\Sel(A / K)$
        corresponds to the diagonal in the dual of this presentation.
	
	Next assume that $k$ is finite.
	The sequence
	$0 \to A[p^{\infty}]^{0} \to A[p^{\infty}] \to \pi_{0}(A[p^{\infty}]) \to 0$
	still canonically splits.
	We have
		\[
				R^{S}(A / K \closure{k})^{\vee}
			\cong
				\Hom_{\Z_{p}} \bigl(
					T_{p} \pi_{0}(A[p^{\infty}]),
					T_{p} \pi_{0}(X[p^{\infty}])
				\bigr),
		\]
		\begin{align*}
					\Sel(A / K \closure{k})^{\vee}
				\cong
			&			\Hom_{\Z_{p}} \bigl(
							T_{p} \pi_{0}(A[p^{\infty}]),
							T_{p} \pi_{0}(X[p^{\infty}])
						\bigr)
			\\
			&	\quad
					\oplus
						\Hom_{\Z_{p}} \bigl(
							T_{p} \pi_{0}(X[p^{\infty}]),
							T_{p} \pi_{0}(A[p^{\infty}])
						\bigr)
		\end{align*}
	as $\Z_{p}[[\Gal(\closure{k} / k)]]$-modules.
	Let $\alpha_{X}, \alpha_{A} \in \Z_{p}^{\times}$ be
	the unit roots of the Weil polynomials for $X$ and $A$, respectively.
	Then, if $\alpha_{X} \not\equiv \alpha_{A} \mod p$,
	then $R^{S}(A / K_{\infty})^{\vee}$ and $\Sel(A / K_{\infty})^{\vee}$ are
	both zero.
	If $\alpha_{X} \equiv \alpha_{A} \mod p$, then
		\[
				R^{S}(A / K_{\infty})^{\vee}
			\cong
				\Hom_{\Z_{p}} \bigl(
					T_{p} \pi_{0}(A[p^{\infty}]),
					T_{p} \pi_{0}(X[p^{\infty}])
				\bigr),
		\]
		\begin{align*}
					\Sel(A / K_{\infty})^{\vee}
				\cong
			&			\Hom_{\Z_{p}} \bigl(
							T_{p} \pi_{0}(A[p^{\infty}]),
							T_{p} \pi_{0}(X[p^{\infty}])
						\bigr)
			\\
			&	\quad
					\oplus
						\Hom_{\Z_{p}} \bigl(
							T_{p} \pi_{0}(X[p^{\infty}]),
							T_{p} \pi_{0}(A[p^{\infty}])
						\bigr)
		\end{align*}
	as $\Z_{p}[[\Gal(K_{\infty} / K)]] \cong \Z_{p}[[T]]$-modules.
	Therefore the characteristic ideals of
	$R^{S}(A / K_{\infty})^{\vee}$ and $\Sel(A / K_{\infty})^{\vee}$ are
		\[
				(1 - \alpha_{X} / \alpha_{A} + T),
			\quad
				(1 - \alpha_{X} / \alpha_{A} + T)
				(1 - \alpha_{A} / \alpha_{X} + T),
		\]
	respectively.
	
	From the above calculations,
	one can also see that the intersection of
	$R^{S}(A / K_{\infty})$ and $A(K_{\infty}) \tensor \Q_{p} / \Z_{p}$
	in $\Sel(A / K_{\infty})$ is zero.
	Hence $R^{S}(A / K_{\infty})$ contains no contribution
	from the Mordell--Weil group $A(K_{\infty})$.
\end{example}

\begin{remark}
	Let $S$ be large enough.
	The intersection of
	$R^{S}(A / K_{\infty})$ and $A(K_{\infty}) \tensor \Q_{p} / \Z_{p}$
	in $\Sel(A / K_{\infty})$ is called the fine Mordell--Weil group (\cite[Section 2]{Wut07}).
	This subgroup seems to be finite in many cases,
	even when $R^{S}(A / K_{\infty})$ is infinite,
	contrary to the number field case treated in \cite{Wut07}.
	
	To see examples of this finiteness, let $\dim(A) = 1$.
	If $A$ is non-isotrivial, then $R^{S}(A / K_{\infty})$ is finite
	by Corollary \ref{0040} \eqref{0041} and the paragraph thereafter.
	Since $R^{S}(A / K_{\infty})$ is of cofinite type in general by Theorem \ref{0014} \eqref{0015},
	we may ignore bounded torsion and work up to isogeny.
	If $A$ is isotrivial, then we may therefore assume that $A$ is constant.
	If $A$ is supersingular, then $R^{S}(A / K_{\infty}) = 0$
	by Corollary \ref{0040} \eqref{0042}.
	If $A$ and $X$ are both ordinary elliptic curves over $k$,
	then $R^{S}(A / K_{\infty})$ is non-zero in general
	but the fine Mordell--Weil group is zero
	as we saw at the end of Example \ref{0043}.
	
	The case with $\dim(A) > 1$ is less clear.
	But Corollary \ref{0040} \eqref{0041} and \eqref{0042} and the paragraph thereafter
	basically say that
	$R^{S}(A / K_{\infty})$ is finite or even zero
	if $A$ is ``very general'' (for example, ordinary with maximal Kodaira-Spencer rank)
	or  ``very special'' (for example, has infinitesimal $A[p]$).
\end{remark}


\section{%
	\texorpdfstring{Ramified $p$-adic Lie extensions}
	{Ramified p-adic Lie extensions}
}
\label{0051}

Below we freely use the theory of (not necessarily commutative) $p$-adic Lie groups,
their completed group algebras and finitely generated modules over them.
References for this theory include
\cite{Laz65}, \cite{LM87} and \cite{How02}.

Let $k = \closure{k}$, $X$, $K$, $A$, $S$, $U$ and $\mathcal{A}$ be
as in Sections \ref{0053} and \ref{0025}.
Let $L / K$ be a (possibly infinite) Galois extension.
The fine Selmer group $R^{S}(A / L)$ of $A \times_{K} L$ can be defined as before,
which is the direct limit of $R^{S}(A / L')$ over finite subextensions $L'$ of $L / K$.
Its Pontryagin dual $R^{S}(A / L)^{\vee}$ is naturally
a module over $\Z_{p}[[\Gal(L / K)]]$.

\begin{theorem} \label{0048}
	Assume that
	\begin{itemize}
		\item
			$\Gal(L / K)$ is a pro-$p$ $p$-adic Lie group without elements of order $p$,
		\item
			$L / K$ is unramified outside a finite set of places of $K$ and
		\item
			$S \ne \emptyset$ and $\mathcal{A}[p]^{\natural}$ is \'etale over $U$.
	\end{itemize}
	Then the following are true:
	\begin{enumerate}
		\item \label{0045}
			$R^{S}(A / L)^{\vee}$ is finitely generated over $\Z_{p}[[\Gal(L / K)]]$.
		\item \label{0046}
			For any finite set of places $S'$ of $K$ containing $S$,
			we have $R^{S'}(A / L) = R^{S}(A / L)$.
		\item \label{0047}
			Let $n \ge 0$.
			If $A(K^{\sep})[p^{\infty}]$ is killed by $p^{n}$,
			then $R^{S}(A / L)$ is also killed by $p^{n}$.
	\end{enumerate}
\end{theorem}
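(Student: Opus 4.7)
The plan is to deduce Theorem \ref{0048} from the commutative, unramified case already handled in Sections \ref{0053}--\ref{0024}: statements \eqref{0046} and \eqref{0047} follow from Propositions \ref{0028}\eqref{0037} and \ref{0012} via direct limits at each finite level, while the main statement \eqref{0045} requires a Nakayama-type argument combined with a control theorem.

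For \eqref{0046} and \eqref{0047}, I would write $R^S(A / L) = \dirlim_F R^{S_F}(A / F)$ over finite subextensions $F$ of $L / K$, where $S_F$ is the preimage of $S$ in the normalization $X_F$ of $X$ in $F$. The \'etaleness of $\mathcal{A}[p]^{\natural}$ over $U$ pulls back to the \'etaleness of $\mathcal{A}_F[p]^{\natural}$ over $U_F$, so Proposition \ref{0028}\eqref{0037}, applied over $F \closure{k}$ and descended to $F$ by the $p$-primary Hochschild--Serre argument of Proposition \ref{0052}, gives $R^{S'_F}(A / F) = R^{S_F}(A / F)$ for each $F$; taking direct limits yields \eqref{0046}. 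Similarly, Proposition \ref{0012} applied at each finite level (using that $A_F(F^{\sep})[p^{\infty}] = A(K^{\sep})[p^{\infty}]$ inherits the $p^n$-torsion hypothesis) gives \eqref{0047}.

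For \eqref{0045}, I would first use \eqref{0046} to enlarge $S$ to include the (finite) set of ramification places of $L / K$; the \'etaleness hypothesis is preserved under restriction to smaller $U$. Set $G = \Gal(L / K)$ and $M = R^S(A / L)^{\vee}$. By compact Nakayama for the pro-$p$ local ring $\Z_p[[G]]$, the compact module $M$ is finitely generated over $\Z_p[[G]]$ if and only if the coinvariants $M_G$ are finitely generated over $\Z_p$, equivalently (by Pontryagin duality) $R^S(A / L)^G$ is of cofinite type over $\Z_p$. From equation \eqref{0022},
$$ R^S(A / K) = \Ker \left( H^1(U, \mathcal{A}[p^\infty]^{\natural}) \to \bigoplus_{v \in S} H^1(K_v, A[p^\infty]^{\natural}) \right), $$
with a parallel description over $L$ using the pro-\'etale Galois $G$-cover $U_L \to U$. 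A snake-lemma chase combining the Hochschild--Serre spectral sequence for the global term $H^1(U, \cdot)$ and the inflation--restriction sequences for the finitely many local terms at $v \in S$ (with decomposition subgroups $G_w \subset G$) yields the control theorem: the restriction map $R^S(A / K) \to R^S(A / L)^G$ has kernel and cokernel of cofinite type over $\Z_p$. The error terms are of the form $H^i(G, A[p^\infty](L))$ and $H^i(G_w, A[p^\infty](L_w))$ for $i = 1, 2$; these are of cofinite type over $\Z_p$ because $G$ and $G_w$ are compact $p$-adic Lie groups of finite dimension and the coefficient modules are of cofinite type over $\Z_p$ of corank at most $2 \dim A$. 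Combined with the finiteness of $R^S(A / K) \cong H^1(X, \mathcal{A}[p^\infty]^{\natural, S})$ (Proposition \ref{0028} and Milne's finiteness for \'etale cohomology of constructible sheaves on a proper curve over a finite field), this forces $R^S(A / L)^G$ to be of cofinite type over $\Z_p$, proving \eqref{0045}.

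The principal obstacle is the control theorem in the third paragraph: while the cofinite-type cohomology estimates for compact $p$-adic Lie groups are standard, carefully verifying the compatibility of the Hochschild--Serre and inflation--restriction sequences with the fine-Selmer-defining diagrams (and the interplay between the global $H^1(U, \cdot)$ term and the finitely many local terms at $v \in S$) is the technical heart of the argument.
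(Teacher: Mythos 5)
Your proposal is correct, and its overall skeleton matches the paper's: both reduce part \eqref{0045} via the profinite Nakayama lemma \cite{BH97} to showing that $R^{S}(A/L)^{G}$ is of cofinite type over $\Z_{p}$, both obtain parts \eqref{0046} and \eqref{0047} from Propositions \ref{0028} and \ref{0012} at finite levels plus a limit argument, and both dispose of the error terms using cofiniteness of $H^{i}(G, A(L)[p^{\infty}])$ and $H^{i}(G_{w}, A(L_{w})[p^{\infty}])$ --- for which you should cite Howson \cite[Theorem 1.1]{How02} (this is precisely where the pro-$p$, no $p$-torsion hypothesis is used) rather than calling it standard. Where you genuinely diverge is the control step. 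You first enlarge $S$ to contain the ramification locus (legitimate and non-circular, since part \eqref{0046} depends only on Propositions \ref{0028} and \ref{0012}), so that $U_{L} \to U$ is pro-\'etale, and then run a classical snake-lemma control theorem on the defining kernels, with Hochschild--Serre for $H^{1}(U_{L}, \cdot)$ and inflation--restriction at the finitely many places of $S$. The paper instead leaves $S$ untouched: it uses the $S$-independent description $R^{S}(A/L) \cong H^{1}(X_{L}, g_{L,\ast}(A[p^{\infty}]^{\natural}))$ from Proposition \ref{0028} \eqref{0037}, introduces an auxiliary nonempty finite set $T$ containing the ramification locus, and compares invariants through the compact-support cohomology of $V_{L} = X_{L} \setminus T_{L}$, where $H^{0}_{c} = 0$ gives the exact identification $H^{1}_{c}(V_{L}, \cdot)^{G} \cong H^{1}_{c}(V, \cdot)$; its error terms are $H^{1}(G_{w}, A(L_{w})[p^{\infty}])$ and $H^{2}(G, A(L)[p^{\infty}])$ at places of $T$. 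Your route is the more familiar Iwasawa-theoretic control argument; the paper's route avoids any interaction between $S$ and the ramification locus and gets an equality of invariants rather than a map with controlled kernel and cokernel.

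Two small corrections that do not affect the outcome. First, in the setting of Section \ref{0051} the base field $k$ is already algebraically closed, so the descent via Proposition \ref{0052} is unnecessary here (it is what turns Theorem \ref{0048} into Theorem \ref{0050}). Second, $R^{S}(A/K)$ is in general only of cofinite type over $\Z_{p}$, not finite: Proposition \ref{0028} \eqref{0037} gives cofinite type, and Example \ref{0043} shows the corank can be positive; since your argument only needs cofinite type at the base level, the conclusion stands.
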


Note that the statement requires no relation
between the places ramifying in $L / K$ and the set $S$.

\begin{proof}
	\eqref{0045}
	Let $G = \Gal(L / K)$.
	By the profinite Nakayama lemma
	\cite[Corollary in Section 3]{BH97},
	it is enough to show that $R^{S}(A / L)^{G}$ is of cofinite type over $\Z_{p}$.
	
	Fix a non-empty finite set of places $T$ of $K$
	such that $L$ is unramified outside $T$.
	Let $X_{L}$ be the normalization of $X$ in $L$.
	Set $V = X \setminus T$.
	Let $T_{L}$ and $V_{L}$ be the inverse images of $T$ and $V$,
	respectively, in $X_{L}$.
	Let $g_{L} \colon \Spec L_{\et} \to X_{L, \et}$ be the natural morphism
	on the \'etale sites.
	For a finite Galois subextension $L'$ of $L / K$,
	we use similar notation $X_{L'}$, $T_{L'}$, $V_{L'}$ and $g_{L'}$.
	
	By Proposition \ref{0028} \eqref{0037} and a limit argument
	applied to each finite Galois subextension of $L / K$,
	we have a canonical isomorphism
	$R^{S}(A / L) \cong H^{1}(X_{L}, g_{L, \ast}(A[p^{\infty}]^{\natural}))$
	of $G$-modules.
	For $i \ge 0$, define a group
	$H^{i}_{c}(V_{L}, g_{L, \ast}(A[p^{\infty}]^{\natural}))$
	to be the direct limit of the usual compact support \'etale cohomology
	$H^{i}_{c}(V_{L'}, g_{L', \ast}(A[p^{\infty}]^{\natural}))$
	with respect to the contravariant functoriality of $H^{i}_{c}$
	in finite (and hence proper) morphisms,
	where $L'$ runs over finite Galois subextensions of $L / K$.
	The long exact sequence of compact support \'etale cohomology
	gives an exact sequence
		\[
				0
			\to
				A(L)[p^{\infty}]
			\to
				\bigoplus_{w \in T_{L}}
					A(L_{w})[p^{\infty}]
			\to
				H^{1}_{c}(V_{L}, g_{L, \ast}(A[p^{\infty}]^{\natural}))
			\to
				R^{S}(A / L)
			\to
				0,
		\]
	where $L_{w}$ is the (henselian or complete) local field of $L$ at $w$.
	Let $C_{L}$ be the image of the middle morphism
		\[
				\bigoplus_{w \in T_{L}}
					A(L_{w})[p^{\infty}]
			\to
				H^{1}_{c}(V_{L}, g_{L, \ast}(A[p^{\infty}]^{\natural})).
		\]
	The Hochschild--Serre spectral sequence
		\[
				E_{2}^{i j}
			=
				H^{i} \bigl(
					G,
					H^{j}_{c}(V_{L}, g_{L, \ast}(A[p^{\infty}]^{\natural}))
				\bigr)
			\Longrightarrow
				H^{i + j}_{c}(V, g_{\ast}(A[p^{\infty}]^{\natural}))
		\]
	and $H^{0}_{c}(V_{L}, g_{L, \ast}(A[p^{\infty}]^{\natural})) = 0$ show that
		\[
				H^{1}_{c}(V_{L}, g_{L, \ast}(A[p^{\infty}]^{\natural}))^{G}
			\cong
				H^{1}_{c}(V, g_{\ast}(A[p^{\infty}]^{\natural})).
		\]
	Therefore we have an exact sequence
		\[
				H^{1}_{c}(V, g_{\ast}(A[p^{\infty}]^{\natural}))
			\to
				R^{S}(A / L)^{G}
			\to
				H^{1}(G, C_{L}).
		\]
	The similar exact sequence
		\[
				0
			\to
				A(K)[p^{\infty}]
			\to
				\bigoplus_{v \in T}
					A(K_{v})[p^{\infty}]
			\to
				H^{1}_{c}(V, g_{\ast}(A[p^{\infty}]^{\natural}))
			\to
				R^{S}(A / K)
			\to
				0
		\]
	and Proposition \ref{0028} \eqref{0037} show that
	$H^{1}_{c}(V, g_{\ast}(A[p^{\infty}]^{\natural}))$ is of cofinite type over $\Z_{p}$.
	Therefore it is enough to show that
	$H^{1}(G, C_{L})$ is of cofinite type over $\Z_{p}$.
	We have an exact sequence
		\[
				H^{1} \left(
					G,
					\bigoplus_{w \in T_{L}}
						A(L_{w})[p^{\infty}]
				\right)
			\to
				H^{1}(G, C_{L})
			\to
				H^{2}(G, A(L)[p^{\infty}]).
		\]
	We have
		\[
				H^{1} \left(
					G,
					\bigoplus_{w \in T_{L}}
						A(L_{w})[p^{\infty}]
				\right)
			\cong
				\bigoplus_{v \in T}
				H^{1} \left(
					G_{w},
					A(L_{w})[p^{\infty}]
				\right),
		\]
	where, on the right-hand side,
	we chose an element $w \in T_{L}$ lying over each $v \in T$
	and $G_{w} \subset G$ is the decomposition group at $w$.
	By assumption, $G_{w}$ is also a pro-$p$ $p$-adic Lie group
	without elements of order $p$.
	Hence the groups
		\[
				H^{i}(G, A(L)[p^{\infty}]),
			\quad
				H^{i}(G_{w}, A(L_{w})[p^{\infty}])
		\]
	are of cofinite type over $\Z_{p}$ for all $i$
	by \cite[Theorem 1.1]{How02}.
	
	\eqref{0046}, \eqref{0047}
	These follow from Propositions \ref{0028} and \ref{0012}
	applied to each finite Galois subextension of $L / K$
	and a limit argument.
\end{proof}

We now assume the setting of Section \ref{0049},
so $k$ is finite.
Let $L / K$ be a Galois extension.
Again we have the fine Selmer group $R^{S}(A / L)$ of $A \times_{K} L$,
which is the direct limit of $R^{S}(A / L')$ over finite subextensions $L'$ of $L / K$.
Its Pontryagin dual $R^{S}(A / L)^{\vee}$ is naturally
a module over $\Z_{p}[[\Gal(L / K)]]$.

\begin{theorem} \label{0050}
	Assume that
	\begin{itemize}
		\item
			$L$ contains $K_{\infty}$,
		\item
			$\Gal(L / K)$ is a pro-$p$ $p$-adic Lie group without elements of order $p$,
		\item
			$L / K$ is unramified outside a finite set of places of $K$ and
		\item
			$S \ne \emptyset$ and $\mathcal{A}[p]^{\natural}$ is \'etale over $U$.
	\end{itemize}
	Then $R^{S}(A / L)^{\vee}$ is a finitely generated torsion $\Z_{p}[[\Gal(L / K)]]$-module
	whose $\mu$-invariant is zero.
\end{theorem}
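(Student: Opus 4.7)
Set $G = \Gal(L/K)$ and $H = \Gal(L/K_{\infty})$, so that $G/H = \Gal(K_{\infty}/K) \cong \Z_{p}$. The plan is to prove the stronger statement that $M := R^{S}(A/L)^{\vee}$ is finitely generated over the subring $\Omega := \Z_{p}[[H]]$ of $\Lambda_{L} := \Z_{p}[[G]]$. Since $G$ is torsion-free, so is its closed subgroup $H$; by the structure theory of noncommutative Iwasawa algebras of torsion-free compact $p$-adic Lie groups (cf.\ \cite{How02}), a $\Lambda_{L}$-module that is finitely generated over $\Omega$ is automatically finitely generated $\Lambda_{L}$-torsion with vanishing $\mu$-invariant. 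Thus all three conclusions of the theorem follow once finite generation of $M$ over $\Omega$ is established.

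By the profinite Nakayama lemma \cite[Corollary in Section 3]{BH97} applied to $\Omega$, it suffices to show that $M_{H} = M/I_{H}M$ is finitely generated over $\Z_{p}$, equivalently that $R^{S}(A/L)^{H}$ is of cofinite type over $\Z_{p}$. For this I would run the Hochschild--Serre control argument of the proof of Theorem \ref{0048} verbatim, replacing $K$ by $K_{\infty}$ and $G$ by $H$ throughout. Fix a non-empty finite set $T$ of places of $K$ containing $S$ and all places ramifying in $L/K$, set $V = X \setminus T$, and let $X_{K_{\infty}}, X_{L}$ be the normalizations of $X$ in $K_{\infty}, L$ with inverse images $V_{K_{\infty}}, V_{L}, T_{K_{\infty}}, T_{L}$. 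Note that $T_{K_{\infty}}$ is finite, since the Frobenius at each $v \in T$ topologically generates a finite-index closed subgroup of $\Gal(K_{\infty}/K) \cong \Z_{p}$. A direct limit argument over finite Galois subextensions of $L/K_{\infty}$, combined with Proposition \ref{0028} \eqref{0037}, places $R^{S}(A/L)$ in the four-term exact sequence
\[
0 \to A(L)[p^{\infty}] \to \bigoplus_{w \in T_{L}} A(L_{w})[p^{\infty}] \to H^{1}_{c}(V_{L}, g_{L,\ast}(A[p^{\infty}]^{\natural})) \to R^{S}(A/L) \to 0
\]
of the same shape as in the proof of Theorem \ref{0048}, and the analogous sequence over $K_{\infty}$ has middle term of cofinite type over $\Z_{p}$ by Theorem \ref{0014} \eqref{0015}. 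Taking $H$-cohomology of the $L$-sequence and invoking the Hochschild--Serre spectral sequence for $H$ acting on the $V_{L}$-level cohomology (with $H^{0}_{c} = 0$) sandwiches $R^{S}(A/L)^{H}$ between this $K_{\infty}$-level $H^{1}_{c}$ and Galois cohomology groups of the form $H^{i}(H, A(L)[p^{\infty}])$ and $H^{i}(H_{w}, A(L_{w})[p^{\infty}])$ for $w$ ranging over a finite set of representatives of the $H$-orbits in $T_{L}$. All of these outer terms are of cofinite type over $\Z_{p}$, respectively by Theorem \ref{0014} \eqref{0015} and by \cite[Theorem 1.1]{How02} applied to the torsion-free pro-$p$ $p$-adic Lie groups $H$ and $H_{w}$.

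The main obstacle, I expect, lies in the technical bookkeeping required to extend the compact-support cohomology and Hochschild--Serre formalism of Theorem \ref{0048} from the finite-type base $K$ to the pro-infinite base $K_{\infty}$: the scheme $V_{K_{\infty}}$ is not of finite type over $k$, so $H^{\ast}_{c}(V_{K_{\infty}}, \cdot)$ must be defined as a direct limit over finite subextensions of $K_{\infty}/K$ (mirroring the $L/K_{\infty}$ limit already used above), and one must verify that these nested limits commute correctly with the $H$-invariant formation and preserve exactness of the four-term sequence and the spectral sequence. Once this is in place, the algebraic upgrade from ``finitely generated over $\Omega$'' to ``$\Lambda_{L}$-torsion with $\mu = 0$'' is purely formal and requires no further geometric input.
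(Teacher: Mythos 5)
Your overall skeleton coincides with the paper's: both proofs reduce the theorem to showing that $R^{S}(A/L)^{\vee}$ is finitely generated over $\Z_{p}[[H]]$ with $H = \Gal(L/K_{\infty})$, and then conclude torsionness and $\mu = 0$ by the standard algebra for $G/H \cong \Z_{p}$ (the paper cites the remarks in the proof of \cite[Corollary 3.3]{CS05}; your attribution of this step to \cite{How02} is imprecise, but the fact you use is the right one). The divergence, and the gap, lies in how you obtain finite generation over $\Z_{p}[[H]]$. The paper does not redo any cohomology over $K_{\infty}$: it applies Theorem \ref{0048} \eqref{0045} to the extension $L\closure{k}/K\closure{k}$, whose Galois group is a closed subgroup of $H$, and then descends from $\closure{k}$ to the $K_{\infty}$-level by one application of the pro-prime-to-$p$ invariance argument of Proposition \ref{0052}. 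You instead propose to re-run the proof of Theorem \ref{0048} ``verbatim'' with $K$ replaced by $K_{\infty}$ and $G$ by $H$, quoting Proposition \ref{0028} \eqref{0037} and the compact-support/Hochschild--Serre formalism at the finite levels $L'$ of $L/K_{\infty}$. But Proposition \ref{0028} (and Propositions \ref{0008} and \ref{0011} behind it) is proved only in the setting of Section \ref{0053}, where the constant field is algebraically closed: the proof of Proposition \ref{0008} uses that the rings $\Order_{v}$ are strictly henselian, which fails for the curves $X_{L'}$ with $K_{\infty} \subseteq L' \subseteq L$, whose constant and residue fields are infinite but not separably closed. So the citation as written does not apply, and the identifications of the terms in your four-term sequences with $A(\cdot)[p^{\infty}]$ and $R^{S}$ are not yet justified at these levels. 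The missing ingredient is exactly the mechanism of Proposition \ref{0052}: the absolute Galois groups of $k_{\infty}$ and of the residue fields at places above it are pro-prime-to-$p$, while all coefficients are $p$-primary, so the algebraically closed statements descend. You do flag a technical obstacle, but you locate it in the non-finite-type base $V_{K_{\infty}}$ (the direct-limit formalism there is routine), not in the non-algebraically-closed constant fields, which is the substantive issue.

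Once that descent is inserted at each level, your argument can be completed; in effect it reproves Theorem \ref{0048} over the base $K_{\infty}$, with the extra inputs of the profinite Nakayama lemma over $\Z_{p}[[H]]$ and Theorem \ref{0014} \eqref{0015} for the middle compact-support term. This is legitimate but much longer than the paper's proof, which is a three-line reduction. The cleanest repair of your write-up is precisely the paper's shortcut: work over $\closure{k}$, where Theorem \ref{0048} applies as stated, and take invariants under the pro-prime-to-$p$ constant-field Galois group once at the end.
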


Here, recall from the first paragraph of \cite[Section 2]{CS05}
that a finitely generated torsion $\Lambda$-module $M$
(where $\Lambda = \Z_{p}[[\Gal(L / K)]]$) is said to be
\emph{pseudo-null} if $\Ext^{1}_{\Lambda}(M, \Lambda) = 0$
and that $M$ is said to have \emph{$\mu$-invariant zero}
if $M[p^{\infty}]$ is pseudo-null.

\begin{proof}
	By Theorem \ref{0048} \eqref{0045} applied to $K \closure{k}$,
	we know that $R^{S}(A / L \closure{k})^{\vee}$ is finitely generated over
	$\Z_{p}[[\Gal(L \closure{k} / K \closure{k})]]$.
	By an argument similar to the proof of Proposition \ref{0052},
	this implies that $R^{S}(A / L)^{\vee}$ is finitely generated over
	$\Z_{p}[[\Gal(L / K_{\infty})]]$.
	From this, the statement of the proposition follows
	by the general remarks in the proof of \cite[Corollary 3.3]{CS05}.
\end{proof}

\begin{remark} \mbox{}
	\begin{enumerate}
	\item
		In the number field setting,
		Coates--Sujatha also make another, related conjecture
		\cite[Section 4, Conjecture B]{CS05}
		about pseudo-nullity of fine Selmer groups over $p$-adic Lie extensions.
		A function field version of this conjecture is
		studied by Ghosh--Jha--Shekhar \cite{GJS22},
		who obtain a positive result \cite[Theorem 3.14]{GJS22}
		for certain $\Z_{p}^{2}$-extensions $L / K$
		and certain ordinary elliptic curves $A$.
		Here are what our Theorems \ref{0048} and \ref{0050} can say
		about the pseudo-nullity of $R^{S}(A / L)^{\vee}$:
		
		Under the assumptions of Theorem \ref{0050},
		if furthermore $A(K^{\sep})[p^{\infty}]$ is finite,
		then $R^{S}(A / L)^{\vee}$ is killed by a power
		of $p$ by Theorem \ref{0048} \eqref{0047}.
		With the vanishing of $\mu$ (Theorem \ref{0050}),
		this implies that $R^{S}(A / L)^{\vee}$
		is indeed pseudo-null. Note that this includes $A$
		being any non-isotrivial elliptic curve
		as we noted after Corollary \ref{0040}.
		
		If moreover $A(K^{\sep})[p^{\infty}]$ is zero, 
		then $R^{S}(A / L)^{\vee}$ is zero by the same
		reason. Note again that this includes $A$
		being any elliptic curve with $j$-invariant
		not a $p$-th power in $K$ or $A$ being any
		supersingular elliptic curve over $K$.
	\item
		Venjakob \cite[Definition 3.32]{Ven02} treats
		compact $p$-adic Lie groups $G$ without $p$-torsion,
		not necessarily pro-$p$ but with $\F_{p}[[G]]$ integral,
		to define the $\mu$-invariant
		for finitely generated $\Z_{p}[[G]]$-modules.
		In contrast, we assumed in this section that
		$\Gal(L / K)$ is pro-$p$ for simplicity
		since the cited references \cite{CS05} and \cite{How02} make this assumption.
		Also the profinite Nakayama lemma is difficult to use
		in the non-pro-$p$ case.
		More work will be needed to extend our results to non-pro-$p$ extensions.
	\end{enumerate}
\end{remark}


\newcommand{\etalchar}[1]{$^{#1}$}

\end{document}